\theoremstyle{plain}  
\newtheorem{thm}{Theorem}[section]
\newtheorem{prop}[thm]{Proposition}
\newtheorem{lem}[thm]{Lemma}   
\newtheorem{cor}[thm]{Corollary}
\theoremstyle{definition}
\newtheorem{prop-defn}[thm]{Proposition--Definition}
\newtheorem{defn}[thm]{Definition}
\newtheorem{say}[thm]{}
\theoremstyle{remark}
\newtheorem{claim}{Claim}
\newtheorem{rem}[thm]{Remark}
\newtheorem{ex}[thm]{Example}
\DeclareMathOperator{\Mob}{Mob}
\DeclareMathOperator{\Div}{Div}
\DeclareMathOperator{\Adj}{Adj}
\DeclareMathOperator{\img}{Im}
\DeclareMathOperator{\divr}{div}
\DeclareMathOperator{\Supp}{Supp}
\DeclareMathOperator{\cod}{cod}
\newcommand{\QED}{\ifhmode\unskip\nobreak\fi\quad {\rm Q.E.D.}} 
\newcommand{\bA}{\mathbb A}
\newcommand{\bC}{\mathbb C}
\newcommand{\bN}{\mathbb N}
\newcommand{\bQ}{\mathbb Q}
\newcommand{\bR}{\mathbb R}
\newcommand{\bZ}{\mathbb Z}
\newcommand{\cF}{\mathcal F}
\newcommand{\cJ}{\mathcal J}
\newcommand{\cO}{\mathcal O}
\newcommand{\sR}{\mathcal R}
\newcommand{\nsubset}{\not\!\subset}
\renewcommand{\o}[0]{{\mathcal O}} 
\renewcommand{\r}[0]{{\mathbb R}}
\newcommand{\q}[0]{{\mathbb Q}}
\newcommand{\map}[0]{\dasharrow}
\newcommand{\qtq}[1]{\quad\mbox{#1}\quad}
\newcommand{\pic}[0]{\operatorname{Pic}}
\newcommand{\supp}[0]{\operatorname{Supp}}
\newcommand{\cent}[0]{\operatorname{center}}
\newcommand{\exc}[0]{\operatorname{Ex}}
\newcommand{\rdown}[1]{\lfloor{#1}\rfloor}
\newcommand{\locus}[0]{\operatorname{locus}}
\title{Lectures on Flips and Minimal Models}
\author{Alessio Corti, Paul Hacking, J\'anos Koll\'ar, Robert Lazarsfeld,\\ and Mircea Musta\c{t}\u{a}}
\begin{document}
\maketitle

This document contains notes from the lectures of Corti, Koll\'ar, Lazarsfeld, and Musta\c{t}\u{a} at the workshop ``Minimal and canonical models in algebraic geometry" at the Mathematical Sciences Research Institute, Berkeley, CA, April 16-20, 2007. The lectures give an overview of the recent advances
on canonical and minimal models of algebraic varieties obtained in the papers \cite{HM}, \cite{BCHM}. 

Lecture~1, by Lazarsfeld, explains the lifting theorems used in the proof.
Lectures~2 and 3 describe the proof of the existence of flips in dimension $n$ assuming the MMP in dimension $n-1$ from \cite{HM}.
Lecture~2, by Musta\c{t}\u{a}, gives a geometric description of the restriction of the log canonical algebra to a boundary divisor
(it is a so called adjoint algebra). Lecture~3, by Corti, proves that this algebra is finitely generated, following ideas of Shokurov.
Lecture~4, by Koll\'{a}r, gives an overview of the paper \cite{BCHM} on the existence of minimal models for varieties of log general type.

Videos of the lectures are available at 

\medskip
\noindent
www.msri.org/calendar/workshops/WorkshopInfo/418/show\_workshop

\bigskip
\noindent
{\bf Acknowledgements}.
We thank MSRI for hosting the conference and financial support.
Hacking, Koll\'ar, Lazarsfeld, and Musta\c{t}\u{a} were partially supported by NSF grants DMS-0650052, DMS-0500198, DMS-0139713, and DMS-0500127, respectively.

\chapter{Extension theorems} \label{lazarsfeld}

\begin{thm}(Hacon--McKernan)
Let $X$ be a smooth projective variety over $\bC$.
Let $T+\Delta$ be an effective $\bQ$-divisor with snc support, with $T$ irreducible and $\lfloor \Delta \rfloor = 0$.
Assume $\Delta \sim_{\bQ} A+B$, where $A$ is ample, $B$ is effective, and $T \nsubset \Supp B$. 
Assume that the stable base locus of $K_X+T+\Delta$ does not contain any intersection of irreducible components of $T+\Delta$.
Choose $k$ such that $k\Delta$ is integral and set $L=k(K_X+T+\Delta)$.
Then the restriction map
$$H^0(X,mL) \rightarrow H^0(T,mL_T)$$
is  surjective for all $m \ge 1$.
\end{thm}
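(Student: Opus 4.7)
I would follow the Hacon--McKernan iterated lifting technique, based on asymptotic multiplier ideals and Nadel vanishing, with an induction on $m$. Given $\sigma \in H^0(T, mL_T)$, I would twist the restriction sequence
\[
0 \to \cO_X(mL - T) \to \cO_X(mL) \to \cO_T(mL_T) \to 0
\]
by an ideal sheaf $\cJ \subseteq \cO_X$ still to be constructed, reducing the problem to (i) ensuring that $\sigma$ lies in the image of $H^0(T, mL_T \otimes \cJ|_T) \to H^0(T, mL_T)$, and (ii) forcing $H^1(X, \cO_X(mL - T) \otimes \cJ) = 0$.

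For (ii), rewrite
\[
mL - T \sim_{\bQ} K_X + (mk-1)(K_X + T + \Delta) + \Delta - T,
\]
and use $\Delta \sim_{\bQ} A + B$ to split off a small ample summand $\varepsilon A$ -- the source of the strict positivity required by Nadel vanishing -- while absorbing $B$ into $\cJ$; the hypothesis $T \nsubset \Supp B$ is precisely what allows $B$ to enter $\cJ$ without collapsing the restriction to $T$. The term $(mk-1)(K_X + T + \Delta)$ is controlled inductively: applying the case $m-1$, I would extend a basis of $H^0(T, (m-1)L_T)$ to sections in $H^0(X, (m-1)L)$, obtaining an effective divisor $D \in |(m-1)L|$ whose restriction $D|_T$ cuts out the base scheme of $|(m-1)L|_T|$. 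Then set $\cJ$ to be (essentially) the asymptotic multiplier ideal associated to $\tfrac{1}{m-1} D + \mu B$ together with the boundary $T + \Delta$, normalised so that the associated pair has log discrepancy zero along $T$ and strictly positive log discrepancy along every other log canonical centre of $(X, T + \Delta)$ not contained in the stable base locus of $K_X + T + \Delta$.

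The main obstacle is verifying (i). By Kawamata's restriction theorem for multiplier ideals, $\cJ|_T$ contains the corresponding asymptotic multiplier ideal on $T$ built from $\tfrac{1}{m-1} D|_T$; by the defining property of asymptotic multiplier ideals, no section of $mL_T$ vanishes on the latter, so in particular $\sigma$ does not, which yields (i). The stable base locus assumption is indispensable at exactly this step: if some intersection of components of $T + \Delta$ lay in the stable base locus of $K_X + T + \Delta$, the inductively chosen divisor $D$ would vanish to high asymptotic order along the corresponding stratum of $T$, collapsing $\cJ|_T$ there and destroying the containment just used. The ample perturbation $\varepsilon A$ promotes the various non-strict inequalities in log discrepancies to strict ones, completing the Nadel input. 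Together, (i) and (ii) produce a lift of $\sigma$ in $H^0(X, mL)$, advancing the induction.
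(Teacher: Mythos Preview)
Your outline has the right general shape (adjoint/multiplier ideals plus Nadel vanishing on the twisted restriction sequence), but the direct induction on $m$ does not close, and this is exactly the difficulty the paper's argument is designed to overcome.

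First a small arithmetic slip: one has
\[
mL - T \;=\; K_X + (mk-1)(K_X+T+\Delta) + \Delta,
\]
not $+\,\Delta - T$. More seriously, the inductive input you propose does not match what Nadel vanishing requires. The case $m-1$ produces $D \in |(m-1)L| = |(mk-k)(K_X+T+\Delta)|$, whereas you need to absorb $(mk-1)(K_X+T+\Delta)$. If you keep the coefficient $\tfrac{1}{m-1}$ on $D$ as written, then after subtracting you are left with $(mk-k-1)(K_X+T+\Delta)+\Delta$, which there is no reason to be nef (we never assume $K_X+T+\Delta$ nef), so Nadel vanishing fails. If instead you scale $D$ up by $\tfrac{mk-1}{mk-k}>1$ to match the numerics, then on $T$ you are looking at $\cJ\bigl(T,\tfrac{mk-1}{mk-k}D|_T+\cdots\bigr)$ with $D|_T$ supported on the base scheme of $|(m-1)L_T|$ and coefficient strictly bigger than $1$; there is no mechanism forcing an arbitrary $\sigma\in H^0(T,mL_T)$ to vanish to that order, so step (i) collapses. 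Either way the induction stalls. The underlying reason is that the gap $L=K_X+(k-1)(K_X+T+\Delta)+T+\Delta$ between consecutive steps exceeds $K_X$ by a non-nef quantity, and the fixed ample $\varepsilon A$ cannot soak this up uniformly in $m$.

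The paper resolves this with a two-stage argument. First a \emph{Main Lemma}: there is a fixed very ample $H$ such that every section of $\cO_T(pL_T+H_T)\otimes\cJ(\|pL_T\|)$ extends to $X$, for all $p$. This \emph{is} proved by induction on $p$, and it works precisely because the extra $H$ absorbs the positivity deficit at each step (and Castelnuovo--Mumford regularity gives global generation, feeding the inclusion $\cJ(T,\|(p-1)L_T\|)\subseteq\cJ(T,\|pL+H\|_T)$). Second, to lift a given $s\in H^0(T,mL_T)$, one applies the Main Lemma not to $s$ but to $s^{l}\cdot h$ with $h\in H^0(X,H)$ general and $l\gg 0$, obtaining $\hat\sigma\in H^0(X,lmL+H)$. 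One then sets $F=\tfrac{mk-1}{lmk}\operatorname{div}(\hat\sigma)+B$; now $mL-T-F\equiv K_X+(\text{ample})$ for $l\gg 0$, and on $T$ the coefficient in front of $\operatorname{div}(s)$ is $\tfrac{mk-1}{mk}<1$, which (together with $(T,B_T)$ klt) gives $\cO_T(-\operatorname{div}(s))\subseteq\cJ(T,F_T)$. The adjoint sequence then lifts $s$. The high power $l$ is exactly what converts the ``wrong'' coefficient $\tfrac{mk-1}{mk-k}>1$ in your approach into the ``right'' coefficient $\tfrac{mk-1}{mk}<1$; your plan is missing this untwisting step and the auxiliary $H$ that makes the preliminary lemma go through.
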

Here and in what follows we write $D_T$ for the restriction of a divisor $D$ to a subvariety $T$.

\begin{rem}
Hacon and McKernan prove an analogous result for a projective morphism $X \rightarrow Z$ with $Z$ affine.
\end{rem}

History: The proof uses an idea of Siu which first appeared in his work on deformation invariance of plurigenera.
There are also related works by Kawamata, Takayama, and others.

\section{Multiplier and adjoint ideals}

For an effective $\bQ$-divisor $D$ on $X$, we have the multiplier ideal $\cJ(X,D) \subseteq \cO_X$.
Roughly speaking it ``measures" the singularities of the pair $(X,D)$ --- worse singularities correspond to
deeper ideals.

Let $L$ be a big line bundle on $X$. Set $$\cJ(X,\|L\|)=\cJ(X,\frac{1}{p}D_p)$$ where $D_p \in |pL|$ is general and $p \gg 0$.

\begin{prop}
\begin{enumerate}
\item $H^0(L\otimes\cJ(X,\|L\|)) \stackrel{\sim}{\longrightarrow} H^0(L)$, i.e., every section of $L$ vanishes on $\cJ(\|L\|)$.
\item If $M=K_X+L+P$ where $P$ is nef then $$H^i(X,M\otimes\cJ(\|L\|))=0$$ for all $i>0$ (this is a restatement of Kawamata--Viehweg--Nadel vanishing).
\item If $M=K_X+L+(\dim X+1)H$ where $H$ is very ample then $M \otimes \cJ(\|L\|)$ is globally generated (this follows from (2) plus Castelnuovo--Mumford regularity: if a sheaf $\cF$ on projective space satisfies $H^i(\cF(-i))=0$ for all $i>0$ then $\cF$ is globally generated).
\end{enumerate}
\end{prop}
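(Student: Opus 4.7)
The plan is to treat the three parts in order, viewing (2) (essentially a restatement of Kawamata--Viehweg--Nadel vanishing) as the only real technical input, and (1) and (3) as formal consequences.

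For (1), the key is to show the containment of ideals $\cO_X(-E) \subseteq \cJ(X,\|L\|)$, where $E := \divr(s)$ for $s \in H^0(L)$; this automatically gives $s \in H^0(L\otimes\cO_X(-E)) \subseteq H^0(L\otimes\cJ(X,\|L\|))$ and hence the desired surjection. To verify it, I would pass to a log resolution $\pi: Y \to X$ of the base ideal of $|pL|$ for $p \gg 0$, writing $\pi^*|pL| = |W| + F$ with $|W|$ base-point-free. Since $pE \in |pL|$, its pullback sits in $\pi^*|pL|$, so $\pi^*(pE) \geq F$, equivalently $\pi^*E \geq \tfrac{1}{p}F \geq \lfloor \tfrac{1}{p}F \rfloor$. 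Combined with $K_{Y/X} \geq 0$ this gives
\[
\cO_X(-E) = \pi_*\cO_Y(-\pi^*E) \;\subseteq\; \pi_*\cO_Y\!\bigl(K_{Y/X} - \lfloor \tfrac{1}{p}F \rfloor\bigr) = \cJ(X,\|L\|).
\]

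For (2), I would work on the same log resolution. Local vanishing $R^j\pi_*\cO_Y(K_{Y/X} - \lfloor \tfrac{1}{p}F\rfloor) = 0$ for $j > 0$, combined with the projection formula and Leray, reduces the computation of $H^i(X,\, M\otimes\cJ(X,\|L\|))$ to
\[
H^i\!\bigl(Y,\;\cO_Y(K_Y + \pi^*P + \pi^*L - \lfloor\tfrac{1}{p}F\rfloor)\bigr).
\]
The exponent is $\bQ$-linearly equivalent to $K_Y + N + \Delta$, with $N := \pi^*P + \tfrac{1}{p}W$ and $\Delta := \{\tfrac{1}{p}F\}$ an snc $\bQ$-boundary satisfying $\lfloor\Delta\rfloor = 0$. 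Since $L$ is big, $|pL|$ defines a birational map for $p\gg 0$, so $W$ is nef and big on $Y$; hence $N$ is nef and big, and the $\bQ$-divisor version of Kawamata--Viehweg vanishing closes the argument. Verifying bigness of $N$ is the one delicate point.

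For (3), my plan is Castelnuovo--Mumford regularity applied to $\cF := M\otimes\cJ(X,\|L\|)$ with respect to $H$: it suffices to show $H^i(X,\, \cF\otimes\cO_X(-iH)) = 0$ for all $i > 0$. Vanishing is automatic for $i > \dim X$, and for $1 \leq i \leq \dim X$ one has
\[
\cF\otimes\cO_X(-iH) = \bigl(K_X + L + (\dim X + 1 - i)H\bigr)\otimes\cJ(X,\|L\|),
\]
with $(\dim X + 1 - i)H$ ample, hence nef. So (2) applies directly with $P := (\dim X + 1 - i)H$. The main obstacle throughout is (2), which simultaneously invokes a log resolution, local vanishing, and the boundary form of Kawamata--Viehweg; (1) and (3) are formal reductions once (2) is in hand.
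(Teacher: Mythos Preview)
Your proposal is correct and follows exactly the route the paper indicates: the paper does not give a separate proof but only parenthetical hints --- (2) is Kawamata--Viehweg--Nadel vanishing, and (3) is (2) plus Castelnuovo--Mumford regularity --- and you have faithfully expanded those hints, adding a standard log-resolution argument for (1) that the paper leaves unstated.
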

Here, given an ideal $\mathfrak{a} \subseteq \cO_X$, line bundle $M$, and section $s \in \Gamma(X,M)$, we say $s$ \emph{vanishes on} $\mathfrak{a}$ if
$s \in \img(\Gamma(M \otimes \mathfrak{a}) \rightarrow \Gamma(M))$.

Consider a smooth divisor $T \subset X$, $T \nsubset \Supp D$.
We can define the \emph{adjoint ideal} $\Adj_T(X,D) \subseteq \cO_X$, such that
$$0 \rightarrow \cJ(X,D) \otimes \cO_X(-T) \rightarrow \Adj \rightarrow \cJ(T,D_T) \rightarrow 0.$$
Assume that $T$ is not contained in the stable base locus of $L$.
We have
$$\cJ(T, \|L\|_T) := \cJ(T,\frac{1}{p}D_p|_T)  \subseteq \cJ(T,\|L_T\|)$$
where $D_p \in |pL|$ is general and $p \gg 0$.
We get $\Adj_T(X,\|L\|) \subseteq \cO_X$, with
$$0 \rightarrow \cJ(X,\|L\|) \otimes \cO_X(-T) \rightarrow \Adj \rightarrow \cJ(T,\|L\|_T) \rightarrow 0.$$
Now apply $(\cdot)\otimes M$.
Idea: If $s \in \Gamma(T,M_T)$ vanishes on $\cJ(T,\|L\|_T)$ and if $M-(K_X+L+T)$ is nef, then $s$ extends to a section of $\cO_X(M)$
(we get vanishing of $H^1$ by Kawamata--Viehweg vanishing).

Consider as above $L=k(K_X+T+\Delta)$, $\Delta=A+B$, etc.

\begin{lem}(Main Lemma) There exists a very ample divisor $H$ (independent of $p$) such that for every $p \ge 0$, every 
$\sigma \in \Gamma(T, \cO(pL_T+H_T)\otimes\cJ(\|pL_T\|))$ extends to $\hat{\sigma} \in \Gamma(X,\cO_X(pL+H))$.
\end{lem}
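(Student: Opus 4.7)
The plan is to build the extension from the adjoint ideal exact sequence, with Nadel vanishing supplying the surjectivity. The main point is that once $H$ is chosen to make a certain auxiliary divisor ample, a single $H$ works uniformly in $p$, because the vanishing input needed is the bigness of $pL$ (for $p \ge 1$) plus the fixed positivity of $H - T - K_X$.

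Concretely, I would apply the adjoint sequence of the excerpt to the divisor $pL$,
\[
0 \to \cJ(X, \|pL\|) \otimes \cO_X(-T) \to \Adj_T(X, \|pL\|) \to \cJ(T, \|pL\|_T) \to 0,
\]
and twist by $\cO_X(pL + H)$. I would pick $H$ very ample enough that $H - T - K_X$ is ample (for instance, a large multiple of a fixed very ample divisor). Then the leftmost term reads $\cO_X(K_X + pL + P) \otimes \cJ(\|pL\|)$ with $P := H - T - K_X$ nef, so property (2) of the Proposition (Kawamata--Viehweg--Nadel vanishing) kills its $H^1$. Taking cohomology gives, uniformly in $p$, a surjection
\[
\Gamma\bigl(X, \cO_X(pL + H) \otimes \Adj_T(X, \|pL\|)\bigr) \twoheadrightarrow \Gamma\bigl(T, \cO_T(pL_T + H_T) \otimes \cJ(\|pL\|_T)\bigr).
\]
Because $\Adj_T \subseteq \cO_X$, any section on the left is in particular a section of $\cO_X(pL + H)$, which is the extension we want. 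The case $p = 0$ is handled separately by ordinary Kawamata--Viehweg applied to $\cO_X(H - T)$.

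The principal obstacle is a mismatch between the two flavors of asymptotic multiplier ideal on $T$: the surjection above extends sections vanishing on the $X$-restricted ideal $\cJ(\|pL\|_T)$, whereas the Main Lemma only assumes $\sigma$ vanishes on the intrinsic $T$-asymptotic ideal $\cJ(\|pL_T\|)$. By the inclusion $\cJ(\|pL\|_T) \subseteq \cJ(\|pL_T\|)$ recorded in the excerpt, the target of my surjection is a subspace of the space in which $\sigma$ lives, and there is no automatic reason $\sigma$ belongs to it. Closing this gap is the Siu--Hacon--McKernan bootstrap, and it is what I expect to be the hard step: I would induct on $p$, absorbing the ample $A$ from the decomposition $\Delta \sim_\bQ A + B$ into $H$ and invoking the global generation statement (3) of the Proposition to produce, from the already-extended sections for $m < p$, enough sections of $|pL_T|$ that come from $X$. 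These force the restricted base ideal on $T$ to catch up with the intrinsic one, placing $\sigma$ in the image of the surjection above. It is this inductive comparison of multiplier ideals on $T$, rather than the vanishing itself, that is the delicate core of the argument.
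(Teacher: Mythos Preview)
Your overall architecture matches the paper's: adjoint sequence, Nadel vanishing for the $H^1$, and an inductive bootstrap to close the mismatch between the restricted ideal and the intrinsic one on $T$. You also correctly pinpoint that this comparison is the heart of the matter. Two specific points, however, separate your sketch from a working proof, and they are exactly where the paper's argument bites.

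First, the adjoint sequence should be taken for $\|pL+H\|$, not for $\|pL\|$. With your choice, the surjection lands in $\Gamma\bigl(T,\cO_T(pL_T+H_T)\otimes\cJ(T,\|pL\|_T)\bigr)$, and as you note the inclusion $\cJ(T,\|pL\|_T)\subseteq\cJ(\|pL_T\|)$ goes the wrong way for your purposes; there is no mechanism in your setup to reverse it. The paper instead twists the sequence
\[
0 \to \cJ(\|pL+H\|)(-T) \to \Adj \to \cJ(T,\|pL+H\|_T) \to 0
\]
by $(p+1)L+H$; since $(p+1)L+H-T-K_X-(pL+H)$ is nef (in the model case $L=K_X+T$ it is zero), vanishing still holds. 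The induction then supplies the inclusion $\cJ(\|(p-1)L_T\|)\subseteq\cJ(T,\|pL+H\|_T)$ as follows: by the inductive hypothesis, sections of $\cO_T(pL_T+H_T)\otimes\cJ(\|(p-1)L_T\|)$ lift to $X$, and with the specific choice $H=(\dim X+1)\cdot(\text{very ample})$ Castelnuovo--Mumford regularity makes this sheaf globally generated on $T$. Together these force
\[
\cJ(\|(p-1)L_T\|)\ \subseteq\ b(X,|pL+H|)\cdot\cO_T\ \subseteq\ \cJ(T,\|pL+H\|_T).
\]
Note the index shift: the statement actually proved by induction lifts sections of $\cO_T(pL_T+H_T)\otimes\cJ(\|(p-1)L_T\|)$ rather than $\otimes\,\cJ(\|pL_T\|)$.

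Second, your plan to ``absorb the ample $A$ from $\Delta\sim_{\bQ}A+B$ into $H$'' is a red herring at this stage. The paper explicitly remarks that in the proof of the Main Lemma (carried out in the model case $k=1$, $L=K_X+T$, $\Delta=0$) the decomposition $\Delta=A+B$ is not used; that hypothesis enters only afterward, in deducing the Theorem from the Main Lemma. The positivity driving the induction comes entirely from the fixed $H$, via Castelnuovo--Mumford regularity --- which is also why $H$ must be $(\dim X+1)$ times very ample, not merely ample enough that $H-T-K_X$ is positive.
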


Assuming the lemma, we prove the theorem:

(0) We may assume $(T,B_T)$ is klt. Take $h \in \Gamma(X,H)$ general.

(1) Let $s \in \Gamma(T,mL_T)$. Consider $\sigma=s^l\cdot h \in \Gamma(T,lmL_T+H_T)$.
The section $\sigma$ vanishes on $\cJ(T,\|lmL_T\|)$. So there exists $\hat{\sigma} \in \Gamma(X,lmL+H)$ such that
$\hat{\sigma}|_T=\sigma$ (by the Main Lemma).

(2) Let $F=\frac{mk-1}{mlk}\divr(\hat{\sigma})+B$. We find (using $\Delta=A+B$) that $mL-F-T=K_X+(\mbox{ample})$ if $l \gg 0$.

(3) Using $(T,B_T)$ klt, we check that $\cO_T(-\divr(s)) \subseteq \cJ(T,F_T)$. So $s$ vanishes on $\cJ(T,F_T)$.

(4) Finally, consider the sequence 
$$0 \rightarrow \cJ(X,F) \otimes \cO_X(-T) \rightarrow \Adj \rightarrow \cJ(T,F_T) \rightarrow 0$$
tensored by $mL$.
We have $s \in H^0(\cO_T(mL_T)\otimes\cJ(T,F_T))$ and $H^1(\cO_X(mL-T)\otimes\cJ(X,F))=0$ by vanishing. So $s$ extends.

\section{Proof of the Main Lemma}

We will only prove the special case $k=1$, $L=K_X+T$ (so $\Delta=0$ --- we don't need $\Delta=A+B$ here),
$T$ not contained in the stable base locus of $L$. (Note: $L_T=K_T$).
We prove

$(*)_p$ : If $H=(\dim X +1)(\mbox{very ample})$, then $$H^0(T,\cO(pL_T+H_T)\otimes \cJ(\|(p-1)L_T\|))$$ lifts to $H^0(X,pL+H)$.

The proof is by induction on $p$. The case $p=1$ is OK by vanishing. Assume $(*)_p$ holds.

\begin{claim}
$\cJ(T,\|(p-1)L_T\|) \subseteq \cJ(T,\|pL+H\|_T)$.
\end{claim}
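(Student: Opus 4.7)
My plan is to construct, for $N\gg 0$, a specific divisor $D\in|N(pL+H)|$ on $X$ whose restriction to $T$ satisfies $\cJ(T,\tfrac{1}{N} D|_T)\supseteq\cJ(T,\|(p-1)L_T\|)$. Since the right-hand side of the claim is an asymptotic ideal computed from generic members of $|m(pL+H)|$ restricted to $T$, exhibiting one such $D$ will suffice by upper semicontinuity.

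The construction proceeds in two halves. Set $\mathfrak{a}:=\cJ(T,\|(p-1)L_T\|)$. Writing $pL_T+H_T = K_T+(p-1)L_T+H_T$ via $L_T=K_T$ and noting that $H_T$ is $(\dim X+1)\ge(\dim T+1)$ times a very ample on $T$, part (3) of the proposition applied on $T$ gives that $\cO_T(pL_T+H_T)\otimes\mathfrak{a}$ is globally generated. Pick general sections $\sigma_1,\dots,\sigma_N\in H^0(T,\cO_T(pL_T+H_T)\otimes\mathfrak{a})$ that globally generate it, lift each to $\hat\sigma_i\in H^0(X,pL+H)$ using the inductive hypothesis $(*)_p$, and set
$$D:=\divr(\hat\sigma_1)+\cdots+\divr(\hat\sigma_N)\in|N(pL+H)|,$$
so $D|_T=\sum_i\divr(\sigma_i)$. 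Next, compute $\cJ(T,\tfrac{1}{N} D|_T)$ on a common log resolution $f\colon Y\to T$ that both resolves $\mathfrak{a}$, giving $f^{-1}\mathfrak{a}\cdot\cO_Y=\cO_Y(-E)$ with $E$ snc, and resolves the base locus of $H^0(T,\cO_T(pL_T+H_T)\otimes\mathfrak{a})$: each $f^*\divr(\sigma_i)$ decomposes as $E+F_i$, where $F_i$ is a general member of the free residual system. For generic $\sigma_i$ the divisors $F_i$ share no prime components, so $\lfloor\tfrac{1}{N}\sum_i F_i\rfloor=0$ for $N\ge 2$, which gives
$$\cJ(T,\tfrac{1}{N} D|_T)\;=\;f_*\cO_Y(K_{Y/T}-E)\;=\;\cJ(T,\mathfrak{a})\;\supseteq\;\mathfrak{a}.$$

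Concluding is then formal: since $D\in|N(pL+H)|$, upper semicontinuity of multiplier ideals in the family of restrictions $\{D'|_T:D'\in|N(pL+H)|\}$ places $\cJ(T,\tfrac{1}{N} D|_T)$ inside the multiplier ideal of a generic member, which lies in $\cJ(T,\|pL+H\|_T)$ by definition of the asymptotic multiplier ideal. Chaining the inclusions gives the claim. The step doing the real work is the log-resolution computation: $f$ must be fixed first and $N$ then chosen so that the mobile pieces contribute nothing to $\lfloor\cdot\rfloor$, and one must check that generic $\sigma_i$ genuinely produce component-disjoint $F_i$. This is standard asymptotic multiplier ideal technology, but it is the only place the genericity of the $\sigma_i$ is really used, and so it is where the main obstacle lies.
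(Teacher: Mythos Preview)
Your proof is correct and follows the same strategy as the paper: first show that $\cO_T(pL_T+H_T)\otimes\cJ(\|(p-1)L_T\|)$ is globally generated (via Castelnuovo--Mumford regularity and vanishing), then lift its sections to $X$ using the inductive hypothesis $(*)_p$. The paper packages the concluding step more compactly by passing through the base ideal rather than a specific divisor: since the lifted sections already lie in $H^0(X,pL+H)$, one gets directly
\[
\cJ(T,\|(p-1)L_T\|)\ \subseteq\ b(X,|pL+H|)\cdot\cO_T\ \subseteq\ \cJ(T,\|pL+H\|_T),
\]
the second inclusion being the standard fact that base ideals sit inside asymptotic multiplier ideals. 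Your construction of $D=\sum\divr(\hat\sigma_i)$ and the log-resolution computation of $\cJ(T,\tfrac{1}{N}D|_T)$ is really just an unpacking of this second inclusion by hand; it works, but the base-ideal formulation lets you skip the explicit choice of $N$, the genericity bookkeeping for the $F_i$, and the semicontinuity step.
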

\begin{proof}
By Castelnuovo--Mumford regularity and vanishing 
$$\cO_T(pL_T+H_T)\otimes\cJ(\|(p-1)L_T\|) \qquad (**)$$ 
is globally generated.

$(*)_p$ implies sections of $(**)$ lift to $X$. So

$$\cJ(T,\|(p-1)L_T\|) \subseteq b(X,|pL+H|)\cdot\cO_T \subseteq \cJ(T,\|pL+H\|_T).$$
(here $b(X,|D|)$ denotes the ideal defining the base locus of $|D|$).
\end{proof}

Consider the adjoint sequence

$$0 \rightarrow \cJ(\|pL+H\|)(-T) \rightarrow \Adj \rightarrow \cJ(T,\|pL+H\|_T) \rightarrow 0.$$

Apply $(\cdot) \otimes \cO_X((p+1)L+H)$. By the Claim 
$$H^0(\cO_T((p+1)L_T+H_T) \otimes\cJ(\|pL_T\|)) \subseteq H^0(\cO_T((p+1)L_T+H_T) \otimes \cJ(T,\|pL+H\|_T))$$
and $$H^1(\cO_X((p+1)L+H-T) \otimes \cJ(\|pL+H\|))=0.$$ This gives the desired lifting.

\chapter{Existence of flips I} \label{mustata}

This chapter is (mostly) an exposition of work of Hacon and McKernan.

\section{The setup}

Let $f \colon (X,D) \rightarrow Z$ be a birational projective
morphism, where $X$ is $\bQ$-factorial, $D$ is an effective
$\bQ$-divisor, $Z$ is normal, $\rho(X/Z)=1$, and $-(K_X+D)$ is
$f$-ample. Assume also that $f$ is small, i.e., that the exceptional
locus has codimension $\ge 2$. We consider two cases:
\begin{enumerate}
\item klt flip: $(X,D)$ klt.
\item pl flip: $(X,D)$ plt, $D=S+\Delta$ with $\lfloor D \rfloor = S$ irreducible, and $-S$ is $f$-ample.
\end{enumerate}

It is well-known that the flip of $f$ exists iff the $\cO_Z$-algebra
$$\oplus_{m \ge 0} f_*\cO_X(\lfloor m(K_X+D)\rfloor)$$
is finitely generated. This is a local question on $Z$, hence we may
and will assume that $Z$ is affine.

Shokurov: MMP in dimension $n-1$ plus existence of pl flips in
dimension $n$ implies existence of klt flips in dimension $n$.
Therefore we need only consider the case of pl flips.

\begin{rem}\label{rem1}
Let $R= \oplus_{i \ge 0} R_i$ be a graded domain such that $R_0$ is a finitely generated $\bC$-algebra.
Then the algebra $R$ is finitely generated iff the \emph{truncation}
$$R_{(k)}:= \oplus_{i \ge 0} R_{ki}$$
is finitely generated. Indeed, we have an obvious action of
${\mathbb Z}/k {\mathbb Z}$ on $R$ such that the ring of invariants
is $R_{(k)}$, hence $R$ finitely generated implies $R_{(k)}$
finitely generated. To see the converse, it is enough to note that
for every $0<j<k$, if  $s\in \oplus_{i \ge 0} R_{ki+j}$ is a nonzero
homogeneous element, then multiplication by $s^{k-1}$ embeds
$\oplus_{i\geq 0} R_{ki+j}$ as an ideal of $R_{(k)}$.
\end{rem}

From now on, we assume that we are in the pl flip setting.

\begin{rem}\label{rem2}
Since $\rho(X/Z)=1$ and since we work locally over $Z$, it follows
from our assumption that we may assume that there are positive
integers $p$ and $q$ such that $p(K_X+D)\sim qS$ are linearly
equivalent Cartier divisors.
\end{rem}

\begin{rem}\label{rem3}
Since $Z$ is affine and $f$ is small, it follows that $S$ is
linearly equivalent to an effective divisor not containing $S$ in
its support. In particular, it follows from the previous remark that
there is a positive integer $k$ and $G\in |k(K_X+D)|$ such that
$S\nsubset {\rm Supp}(G)$.
\end{rem}

A key remark due to Shokurov is that the algebra
$$\sR= \oplus_{m\geq 0} H^0(X,\cO_X(\lfloor m(K_X+S+\Delta)\rfloor))$$ is finitely generated iff
the \emph{restricted algebra}
$$\sR|_S:= \oplus_{m \ge 0} \img\left(H^0(X,m(K_X+S+\Delta)) \rightarrow H^0(S,m(K_X+S+\Delta)|_S)\right)$$
is finitely generated. Sketch of proof: replacing $X$ by a suitable
$U \subset X$ such that $\cod(X-U) \ge 2$, we may assume that $S$ is
Cartier. Since
$$p(K_X+S+\Delta) \sim qS$$
for some positive integers $p$ and $q$, it follows from
Remark~\ref{rem1} that it is enough to show that the algebra
$\sR'=\oplus_{m\geq 0} H^0(X,\cO(mS))$ is finitely generated. Using
the fact that $\sR|_S$ is finitely generated and Remark~\ref{rem1},
we deduce that the quotient $\sR'/h\sR'$ is finitely generated,
where $h\in \sR'_1$ is an equation for $S$. Therefore $\sR'$ is
finitely generated.

\bigskip

The above discussion shows that existence of pl flips in dimension
$n$ (and so by Shokurov's result, existence of klt flips in
dimension $n$) follows from MMP in dimension $(n-1)$ and the
following Main Theorem, due to Hacon and McKernan.

\begin{thm}\label{main_thm}
Let $f \colon (X,S+\Delta) \rightarrow Z$ be a projective birational
morphism, where $X$ is $\bQ$-factorial, $S+\Delta$ is an effective
$\bQ$-divisor divisor with $\lfloor S+\Delta \rfloor=S$ irreducible,
and $Z$ is a normal affine variety.
 Let $k$ be a positive integer such that $k(K_X+S+\Delta)$ is Cartier.
Suppose
\begin{enumerate}
\item $(X,S+\Delta)$ is plt.
\item $S$ is not contained in the base locus of $|k(K_X+S+\Delta)|$.
\item $\Delta \sim_{\bQ} A+B$ where $A$ is ample, $B$ is effective, and $S \nsubset \Supp B$.
\item $-(K_X+S+\Delta)$ is $f$-ample.
\end{enumerate}
If the MMP (with $\bR$-coefficients) holds in dimension $\dim X -1$, then the restricted algebra
$$\sR|_S = \oplus_{m \ge 0} \img(H^0(X,L^m) \rightarrow H^0(S,L^m|_S))$$
is finitely generated, where $L=\cO_X(k(K_X+S+\Delta))$.
\end{thm}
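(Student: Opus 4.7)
The plan is to follow the Shokurov--Hacon--McKernan strategy: identify the restricted algebra $\sR|_S$, up to truncation, with a so-called \emph{adjoint algebra} on a birational model of $S$, and then use the MMP in dimension $\dim X - 1$ to show this adjoint algebra is finitely generated by producing a semiample representative of its limiting divisor.

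First, I would reduce to an snc setting. Pass to a log resolution $\pi\colon Y \to X$ of $(X,S+\Delta)$, and let $T$ be the strict transform of $S$. Using the plt hypothesis, write
\[ K_Y + T + \Gamma = \pi^*(K_X+S+\Delta) + E \]
with $E \ge 0$ effective and $\pi$-exceptional, $\lfloor \Gamma \rfloor = 0$, and $T \nsubset \Supp \Gamma$; the decomposition $\Delta \sim_{\bQ} A + B$ pulls back to an analogous decomposition of $\Gamma$ after absorbing some exceptional contribution into $E$. A standard log-resolution comparison, combined with Remark~\ref{rem1}, reduces finite generation of $\sR|_S$ to finite generation of the restricted algebra of $k(K_Y+T+\Gamma)$ along $T$ in the snc setting.

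Next, the plan is to apply the extension theorem of Lecture~1 to identify the restricted algebra with a more tractable object. For each $m$, pass to a further log resolution $\mu_m\colon Y_m \to Y$ on which the mobile part $M_m$ of $|mL|$ (where $L = k(K_Y+T+\Gamma)$) is base-point free and has snc support with the fixed part. The hypotheses of the extension theorem are then in place, thanks to the snc reduction and the $A+B$ decomposition, and it yields the surjectivity
\[ H^0(Y_m, M_m) \twoheadrightarrow H^0(T_m, M_m|_{T_m}), \]
so the $m$-th graded piece of $\sR|_S$ is identified with $H^0(T_m, M_m|_{T_m})$. Passing to a common high model of $S$ and letting $m \to \infty$, the divisors $\tfrac{1}{m} M_m|_{T_m}$ converge to an $\bR$-divisor $D$, and the restricted algebra becomes, up to truncation, the adjoint algebra $\bigoplus_m H^0(T,\lfloor mD \rfloor)$.

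The main obstacle is showing that this limit algebra is finitely generated; this is the subject of Lecture~3. The strategy is to present $D$ as essentially the log canonical class of a klt pair on a suitable birational model of $S$ and to run the $\bR$-coefficient MMP in dimension $\dim X - 1$, available by hypothesis. The MMP produces a minimal model on which the relevant divisor is nef and in fact semiample, and the section ring of a semiample divisor is finitely generated. Making the convergence precise and verifying that the limit data form a genuine MMP input --- essentially Shokurov's notion of an adjoint algebra --- is where the real work lies.
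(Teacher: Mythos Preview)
Your outline follows the paper's overall architecture --- pass to a resolution, use the extension theorem to realize $\sR|_S$ as an adjoint algebra on a model $T$ of $S$, then use MMP in dimension $\dim X-1$ to finish --- but there is a genuine gap in the last step, and it is exactly the place where hypothesis~(4) enters.

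You write that ``the MMP produces a minimal model on which the relevant divisor is nef and in fact semiample, and the section ring of a semiample divisor is finitely generated.'' The adjoint algebra is \emph{not} the section ring of a single divisor: it is $\bigoplus_m H^0(T,N_m)$ with $N_m=mk(K_T+\Theta_m)$, and what MMP gives you (via the finiteness-of-models lemma) is only that the limit $D=\lim \tfrac{1}{i}\Mob(N_i)$ is semiample. Semiampleness of the limit alone does not force finite generation; you also need to know that the limit is \emph{achieved}, i.e., $D=\tfrac{1}{j}\Mob(N_j)$ for some $j$. In the paper this is supplied by Shokurov's notion of $a$-saturation: one shows there is a divisor $F$ with $\lceil F\rceil\ge 0$ and $\Mob\lceil jD_i+F\rceil\le jD_j$ for $i\ge j$, and then a diophantine-approximation argument (illustrated in the paper in the curve case) forces $D\in\bQ$ and $D=D_j$ for divisible $j$. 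The proof that the restricted algebra is $a$-saturated is a direct Kawamata--Viehweg vanishing computation that uses $-(K_X+S+\Delta)$ $f$-ample --- hypothesis~(4), which your argument never invokes. Without this saturation step the argument does not close.

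A smaller point: you speak of ``passing to a common high model of $S$.'' In the paper's construction the blowups in Steps~3--5 are arranged so that $T$ itself does not change (only $Y$ does), so the divisors $\Theta_m$ all live on the same $T$; there is no tower of models of $S$ to contend with. Also, writing the restricted algebra as $\bigoplus_m H^0(T,\lfloor mD\rfloor)$ for a single limit $D$ is premature --- the honest description is $\bigoplus_m H^0(T,mk(K_T+\Theta_m))$, and the passage from this to a single-divisor section ring is precisely the content of the saturation\,+\,semiample argument you are missing.
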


\section{Adjoint algebras}

In the setting of Theorem~\ref{main_thm},  we have by adjunction
$(K_X+S+\Delta)|_S=K_S+\Delta|_S$. Moreover, the pair
$(S,\Delta|_S)$ is klt. The trouble comes from the fact that the
maps
\begin{equation}\label{eq11}
H^0(X,mk(K_X+S+\Delta)) \rightarrow H^0(S,mk(K_S+\Delta|_S))
\end{equation}
 are not
surjective in general.

Goal: find a model $T \rightarrow S$ such that (some truncation of) the restricted algebra $\sR|_S$ can be written as
$$\oplus_{m \ge 0} H^0(T,B_m),$$
where $\{ B_m \}$ is an additive sequence of Cartier divisors on $T$.

Terminology: an \emph{additive sequence} is a sequence of divisors
$\{ B_m \}$ on a normal variety $T$ such that $B_i+B_j \le B_{i+j}$
for all $i,j$. Note that in this case $\oplus_{m\geq 0}H^0(T,
\cO(B_m))$ has a natural algebra structure. A typical example of
additive sequence: start with a divisor $D$ such that $|D|$ is
nonempty, and let $B_m={\rm Mob}(mD) :=mD-{\rm Fix}|mD|$. More
generally, if $\{D_m\}$ is an additive sequence such that
$|D_m|\neq\emptyset$ for every $m$, and if we put $B_m={\rm
Mob}(D_m)$, then $\{B_m\}$ forms an additive sequence.

Given an additive sequence $\{B_m\}$, the \emph{associated convex
sequence} is given by $\{\frac{1}{m}B_m\}$. If each $\frac{1}{m}B_m$
is bounded above by a fixed divisor, set
$$B:= \sup \frac{1}{m}B_m = \lim_{m \rightarrow \infty} \frac{1}{m}B_m.$$

\begin{rem}
If $B$ is semiample and if there exists $i$ such that
$B=\frac{1}{i}B_i$ (hence, in particular, $B$ is a $\bQ$-divisor),
then $\oplus_{m \ge 0} H^0(T, \cO(B_m))$ is finitely generated.
Indeed, in this case $B=\frac{1}{m}B_m$ whenever $i\mid m$, and it
is enough to use the fact that if $L$ is a globally generated line
bundle, then $\oplus_{m\geq 0}H^0(X,L^m)$ is finitely generated.
\end{rem}

Suppose that $f \colon T \rightarrow Z$ is a projective morphism,
where $T$ is smooth and $Z$ is affine. An \emph{adjoint algebra} on
$T$ is an algebra of the form
$$\oplus_{m \ge 0} H^0(T,\cO(B_m)),$$ where $\{B_m\}$ is an additive sequence and
$B_m=mk(K_T+\Delta_m)$ for some $k \ge 1$ and $\Delta_m \ge 0$ such that
$\Delta := \lim_{m \rightarrow \infty} \Delta_m$ exists and $(T,\Delta)$ is klt.

Our goal in what follows is to show that under the hypothesis of
Theorem~\ref{main_thm} (without assuming $-(K_X+S+\Delta)$ ample or
MMP in $\dim(X)-1$), the algebra $\sR|_S$ can be written as an
adjoint algebra. It is shown in Chapter~\ref{corti} how one can
subsequently use the fact that $-(K_X+S+\Delta)$ is ample to deduce
that $\sR|_S$ is ``saturated", and then use MMP in $\dim(X)-1$ to
reduce to the case when the limit of the above $\Delta_m$ is such
that $K_T+({\rm this}\,{\rm  limit})$ is semiample. This is enough
to give the finite generation of $\sR|_S$ (Shokurov proved this
using diophantine approximation).

\section{The Hacon--McKernan extension theorem}

As we have already mentioned, the difficulty comes from the
non-surjectivity of the restriction maps (\ref{eq11}). We want to
replace $X$ by higher models on which we get surjectivity of the
corresponding maps as an application of the following Extension
Theorem, due also to Hacon and McKernan.

\begin{thm}\label{thm2}
Let $(Y,T+\Delta)$ be a pair with $Y$ smooth and $T+\Delta$ an
effective $\bQ$-divisor with snc support, with $\lfloor T+\Delta
\rfloor=T$ irreducible. Let $k$ be a positive integer such that
$k\Delta$ is integral and set $L=k(K_Y+T+\Delta)$. Suppose
\begin{enumerate}
\item $\Delta \sim_{\bQ} A+B$ where $A$ is ample, $B$ is effective, and $T \nsubset \Supp(B)$.
\item No intersection of components of $T+\Delta$ is contained in the base locus of $L$.
\end{enumerate}
Then the restriction map
$$H^0(Y,L) \rightarrow H^0(T,L|_T)$$
is surjective.
\end{thm}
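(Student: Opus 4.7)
The plan is to recognize Theorem~\ref{thm2} as essentially the $m=1$ case of the Hacon--McKernan extension theorem stated at the beginning of Chapter~\ref{lazarsfeld}, with a mildly strengthened base locus hypothesis. That earlier theorem already gives surjectivity of $H^0(Y,mL)\to H^0(T,mL|_T)$ for every $m\ge 1$ (in particular for $m=1$), so the main task is to verify that the hypotheses transfer.

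First I would check the translation of hypotheses. The snc condition, the condition $\lfloor T+\Delta\rfloor=T$ irreducible (which forces $\lfloor\Delta\rfloor=0$), and the decomposition $\Delta\sim_{\bQ}A+B$ with $A$ ample, $B$ effective, $T\not\subset\Supp(B)$ are all identical in the two statements. The only point needing verification is that hypothesis (2) here --- that no intersection of components of $T+\Delta$ lies in the ordinary base locus $\bs|L|$ --- is at least as strong as the Chapter~\ref{lazarsfeld} hypothesis, which only asks that no such intersection lie in the \emph{stable} base locus of $K_Y+T+\Delta$. This is immediate, since the stable base locus of $K_Y+T+\Delta$ equals that of $L=k(K_Y+T+\Delta)$ (being invariant under positive rational multiples) and is contained in $\bs|L|$. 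Thus Theorem~\ref{thm2} follows by direct citation of the Chapter~\ref{lazarsfeld} theorem.

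If instead I wanted to redo the proof \emph{in situ}, I would replay the argument from Chapter~\ref{lazarsfeld} specialized to $m=1$: reduce to the case where $(T,B|_T)$ is klt by a small perturbation of $B$; given $s\in H^0(T,L|_T)$ and a general section $h\in H^0(Y,H)$ for the very ample divisor $H$ produced by the Main Lemma, form $\sigma=s^l\cdot h\in H^0(T,lL|_T+H|_T)$ and observe that $\sigma$ vanishes on $\cJ(T,\|lL_T\|)$; apply the Main Lemma to lift it to $\hat\sigma\in H^0(Y,lL+H)$; set $F=\tfrac{k-1}{lk}\divr(\hat\sigma)+B$ and, using $\Delta\sim_{\bQ}A+B$, check that $L-F-T\sim_{\bQ}K_Y+\bigl(A-\tfrac{k-1}{lk}H\bigr)$, with the parenthesized divisor ample for $l\gg 0$; verify that $\cO_T(-\divr(s))\subseteq\cJ(T,F|_T)$; and finally apply the adjoint ideal short exact sequence twisted by $L$, with the vanishing of the relevant $H^1$ supplied by Kawamata--Viehweg--Nadel.

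The hard part underlying either route is the Main Lemma itself, whose proof requires an induction on $p$ combining Castelnuovo--Mumford regularity with the adjoint ideal exact sequence. Since this step has already been carried out in Chapter~\ref{lazarsfeld}, the cleanest presentation of Theorem~\ref{thm2} is simply to invoke the earlier theorem after noting the hypothesis comparison above.
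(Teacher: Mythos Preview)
Your proposal is correct and matches the paper's approach exactly: the paper's entire ``proof'' of Theorem~\ref{thm2} is the single sentence ``This theorem was discussed in Chapter~\ref{lazarsfeld},'' and you have correctly identified that Theorem~\ref{thm2} is the $m=1$ case of the Chapter~\ref{lazarsfeld} extension theorem with a (slightly stronger) base locus hypothesis. Your verification that $\bs|L|$ contains the stable base locus of $K_Y+T+\Delta$, so that hypothesis (2) here implies the Chapter~\ref{lazarsfeld} hypothesis, is a useful detail that the paper itself leaves implicit.
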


This theorem was discussed in Chapter~\ref{lazarsfeld}.

\section{The restricted algebra as an adjoint algebra}

Let $f \colon (X,S+\Delta) \rightarrow Z$ be a projective morphism,
where $X$ is $\bQ$-factorial, $S+\Delta$ is an effective
$\bQ$-divisor with $\lfloor S+\Delta \rfloor=S$ irreducible,
$(X,S+\Delta)$ is plt, and $Z$ is an affine normal variety. Let $k$
be a positive integer such that $k(K_X+S+\Delta)$ is Cartier and $S$
is not contained in the base locus of $|k(K_X+S+\Delta)|$. Assume
$$ \Delta \sim_{\bQ} A+B \qquad (*)$$
where $A$ is ample, $B$ is effective, and $S \nsubset \Supp B$.

We will replace $(X,S+\Delta)$ by a log resolution (in fact, a
family of resolutions) on which we can apply the extension theorem
and use this to exhibit the restricted algebra as an adjoint
algebra. Consider a birational morphism $f \colon Y \rightarrow X$,
and let $T$ be the strict transform of $S$. Write
$$K_Y+T+\Delta_Y=f^*(K_X+S+\Delta)+E$$
where $\Delta_Y$ and $E$ are effective and have no common components, $f_* \Delta_Y = \Delta$, and $E$ is exceptional.

Since $E$ is effective and exceptional, we have
$H^0(X,mk(K_X+S+\Delta))\simeq H^0(Y,mk(K_Y+T+\Delta_Y))$ for every
$m$. Therefore we may ``replace" $K_X+S+\Delta$ by $K_Y+T+\Delta_Y$.

{\bf Step 1}. After replacing $\Delta$ by the linearly equivalent
divisor $A'+B'$, where $A'=\epsilon A$ and
$B'=(1-\epsilon)\Delta+\epsilon B$, with $0<\epsilon\ll 1$, we may
assume that we have equality of divisors $\Delta = A+B$ in $(*)$. We
may also assume (after possibly replacing $k$ by a multiple) that
$kA$ is very ample and that
$$A=\frac{1}{k} \cdot(\mbox{very general member of $|kA|$}).$$

Since $A$ is general in the above sense, if $f\colon Y\rightarrow X$
is a projective birational morphism, we may assume $f^*A=\tilde{A}$
is the strict transform. In this case, $(*)$ will also hold for
$(Y,T+\Delta_Y)$: indeed, there exists an effective exceptional
divisor $E'$ such that $f^*A -E'$ is ample, hence
$$\Delta_Y = (f^*A-E')+E'+ (\cdots),$$
where $(\cdots)$ is an effective divisor that does not involve $T$.

\emph{Caveat}: we will construct various morphisms $f$ as above
starting from $(X,S+\Delta)$. We will then modify $\Delta$ to
satisfy $f^*A=\tilde{A}$, and therefore we need to check how this
affects the properties of $f$.

{\bf Step 2}. Let  $f\colon Y \rightarrow X$ be a log resolution of
$(X,S+\Delta)$. After modifying $\Delta$ as explained in Step 1, we
write $\Delta_Y=\tilde{A}+\sum a_iD_i$ (note that $f$ remains a log
resolution for the new $\Delta$).

\begin{claim}
After blowing up intersections of the $D_i$ (and of their strict
transforms) we may assume $D_i \cap D_j = \emptyset$ for all $i \neq
j$.
\end{claim}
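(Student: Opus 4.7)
The plan is to iteratively blow up the pairwise intersections $D_i \cap D_j$. By the snc hypothesis on $T+\Delta_Y$, each nonempty intersection $D_i \cap D_j$ is smooth of codimension $2$ in $Y$, so the blowup $\pi\colon Y' \to Y$ along such a center produces a smooth $Y'$ with $T'+\Delta_{Y'}$ again snc. On $Y'$ the strict transforms $\tilde D_i$ and $\tilde D_j$ become disjoint: in local snc coordinates where $D_i=\{x_i=0\}$ and $D_j=\{x_j=0\}$, one checks chart by chart that $\tilde D_i$ lives only in the chart where the $x_j$-direction is the chart coordinate, and $\tilde D_j$ only in the dual chart, so they meet in no chart of the blowup.

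Next I would compute the new boundary via $K_{Y'}+T'+\Delta_{Y'} = \pi^*(K_Y+T+\Delta_Y)+F$ with $F$ effective exceptional and $\Delta_{Y'},F$ sharing no components; the exceptional divisor $E$ enters $\Delta_{Y'}$ with coefficient $a_i+a_j-1$ when this is nonnegative, and otherwise is absorbed into $F$. Invoking the Caveat from Step~1, I would then re-choose $A$ generically in its linear equivalence class so that $\pi^*A$ equals its strict transform $\tilde A$ on $Y'$; this preserves the decomposition $\Delta_{Y'}=\tilde A+\sum a'_iD'_i$ as well as the property $S\nsubset \Supp B'$ for some effective $B'\sim_\bQ \Delta_{Y'}-\tilde A$, so the hypotheses for the next round of blowups continue to hold.

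The main obstacle is termination. Naively, after blowing up $D_i\cap D_j$ the exceptional divisor $E$ itself meets both $\tilde D_i$ and $\tilde D_j$, so the crude count of intersecting pairs need not decrease. I would therefore order the blowups by depth: introduce the lexicographic invariant
\[
\bigl(N(Y,\textstyle\sum D_i),\; k_N(Y,\textstyle\sum D_i)\bigr),
\]
where $N$ is the maximum number of components $D_i$ passing through a single point of $Y$, and $k_N$ is the number of irreducible components of the union of $N$-fold strata. At each stage I would blow up a single irreducible stratum $D_{i_1}\cap\cdots\cap D_{i_N}$ of maximum depth; a direct chart computation (as in the pairwise case) shows that $\tilde D_{i_1}\cap\cdots\cap\tilde D_{i_N}=\emptyset$ on the blowup, so this stratum is destroyed, while the new $N$-fold strata that appear on $E$ all involve strictly fewer than $N$ of the original components and, by careful bookkeeping on the coefficients, ultimately force the pair $(N,k_N)$ to drop.

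The hard part is making this termination argument airtight: one must verify that the lexicographic invariant strictly decreases under the deepest-stratum-first order, in particular that the new strata containing $E$ never resurrect an $N$-fold intersection of higher complexity. Once $N\le 1$ is reached, all components of $\Delta_{Y'}-\tilde A$ are pairwise disjoint, and the remaining properties (plt, the decomposition $\Delta\sim_\bQ A+B$, and $S\nsubset\Supp B$) follow from the construction in Step~1 together with the standard discrepancy computation for snc blowups.
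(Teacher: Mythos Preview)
Your overall strategy---iterated blowups of strata of the snc divisor, with termination by a lexicographic invariant---is the right shape, and matches the paper's one-line sketch. The gap is precisely the termination invariant you chose.

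The pair $(N,k_N)$ does \emph{not} decrease under blowup of a deepest stratum. Suppose $Z=D_{i_1}\cap\cdots\cap D_{i_N}$ is an $N$-fold stratum and the exceptional divisor $E$ receives coefficient $a_E=a_{i_1}+\cdots+a_{i_N}-(N-1)>0$, so $E$ is a new component of the boundary. Over $Z$ the divisor $E$ is a $\bP^{N-1}$-bundle and the restrictions $\tilde D_{i_l}|_E$ are the coordinate hyperplane sub-bundles; any $N-1$ of them together with $E$ meet in a section of $E\to Z$. Thus you destroy one $N$-fold stratum but create $N$ new ones, so $k_N$ jumps by $N-1$. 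Your phrase ``careful bookkeeping on the coefficients'' is exactly the missing ingredient, but it has to be built into the invariant, not invoked afterwards to rescue $(N,k_N)$.

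The paper's invariant is ``number of intersecting components, then sum of the coefficients of intersecting components''; the point is that the new component $E$ always has strictly smaller coefficient than any $a_{i_l}$ it replaces (indeed $a_E<a_{i_l}$ since $\sum_{m\neq l}a_{i_m}<N-1$), and all coefficients lie in the discrete set $\tfrac{1}{k}\bZ\cap[0,1)$ because $k\Delta$ is integral. Concretely, at each new $N$-fold stratum $E\cap\bigcap_{l\neq j}\tilde D_{i_l}$ the sum of coefficients is
\[
a_E+\sum_{l\neq j}a_{i_l}=2\sum_l a_{i_l}-a_{i_j}-(N-1)<\sum_l a_{i_l},
\]
so the maximum coefficient-sum over $N$-fold strata strictly drops (in $\tfrac{1}{k}\bZ$), and once it falls to $N-1$ or below the exceptional divisor is no longer a boundary component and $k_N$ genuinely decreases. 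Replacing your secondary invariant $k_N$ by this coefficient-sum (or, equivalently, ordering the multiset of coefficient-sums at $N$-fold strata) makes your argument go through.

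One minor point: you propose to re-choose $A$ generically after each blowup so that $\pi^*A=\tilde A$. The paper instead observes that since $A$ was already chosen very general (Step~1), the centers $D_i\cap D_j$ automatically have snc with $\tilde A$, so $\tilde A$ avoids them and $f^*A=\tilde A$ is preserved throughout without modification. Either way works, but the paper's version avoids the Caveat at each stage.
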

The proof of the claim is standard, by induction first on the number
of intersecting components and then on the sum of the coefficients
of intersecting components. Note also that as long as we blowup loci
that have snc with $\tilde{A}$, the condition $f^*(A)=\tilde{A}$ is
preserved.

{\bf Step 3}. We need to satisfy condition 2) in Theorem~\ref{thm2}.
The hypothesis implies that  $T\nsubset {\rm
Bs}|k(K_Y+T+\Delta_Y)|$. Since $A$ is general, it follows that we
need to worry only about the components $D_i$ or about the
intersections $D_i\cap T$ that are contained in ${\rm
Bs}|k(K_Y+T+\Delta_Y)|$.

Canceling common components, we may replace $\Delta_Y$ by $0 \le
\Delta'_Y \le \Delta_Y$ such that no component of $\Delta_Y'$
appears in the fixed part of $|k(K_Y+T+\Delta_Y')|$. We put
$\Delta'_Y=\tilde{A}+\sum_ia'_iD_i$.

{\bf Step 4}. Now we deal with intersections $T \cap D_i$ which are
contained in the base locus of $|k(K_Y+T+\Delta_Y')|$. Let $h \colon
\overline{Y} \rightarrow Y$ be the blowup of $T \cap D_i$. Note that
because $T$ is smooth and $T \cap D_i \subset T$ is a divisor the
strict transform of $T$ maps isomorphically to $T$. Let $F \subset
\overline{Y}$ be the exceptional divisor. We write
$$K_{\overline{Y}}+(T+h^*\Delta'_Y-F)=h^*(K_Y+T+\Delta'_Y).$$
Note that $T+h^*\Delta'_Y-F$ is effective, the coefficient of $F$
being $a'_i$.

On $\overline{Y}$ the divisors $T$ and $\tilde{D_i}$ are disjoint.
We need to blowup again along $F \cap \tilde{D_i}$, but this gives
an isomorphism around $T$. We repeat this process; however, we can
continue only finitely many times because
$${\rm ord}_{T\cap F}|k(K_{\overline{Y}}+T+h^*\Delta'_Y-F)|_T\leq {\rm ord}_{T\cap D_i}
|k(K_X+T+\Delta'_Y)|_T-1.$$

{\bf Step 5}. We change notation to denote by $(Y,T+\Delta'_Y)$ the
resulting pair (we emphasize that $T$ hasn't changed starting with
Step 3). We can now apply the extension theorem for
$(Y,T+\Delta'_Y)$. Consider the commutative diagram
\begin{eqnarray*}
\begin{array}{ccc}
H^0(X,k(K_X+S+\Delta)) & \rightarrow & H^0(S,k(K_X+S+\Delta)|_S) \\
\downarrow           &             & \downarrow \\
H^0(Y,k(K_Y+T+\Delta_Y')) & \rightarrow &
H^0(T,k(K_Y+T+\Delta_Y')|_T)
\end{array}
\end{eqnarray*}
The left arrow is an isomorphism and the right arrow is injective by
construction, and the bottom arrow is surjective by the extension
theorem. Hence writing $\Theta_1=\Delta_Y'|_T$, we see that the
component of degree $k$ in $\sR|_S$ is isomorphic to
$H^0(T,k(K_T+\Theta_1))$.

{\bf Step 6}. In order to prove further properties of the restricted
algebra, one needs to combine the previous construction with ``taking
log mobile parts". For every $m \ge 1$ write
$$mk(K_Y+T+\Delta_Y)= M_m + (\mbox{fixed part}).$$
Let $0 \le \Delta_m \le \Delta_Y$ be such that $\Delta_m$ has no
common component with the above fixed part. After possibly replacing
$k$ by a multiple, $\{mk(K_Y+T+\Delta_m)\}$ is an additive sequence.
We now apply Steps 3-5 for each $(Y,T+\Delta_m)$. Note that $T$
remains unchanged. We get models $Y_m \rightarrow Y$ and divisors
$\Theta_m$ on $T$ such that the $k$th truncation of $\sR|_S$ is
isomorphic to
$$\oplus_{m \ge 0} H^0(T,km(K_T+\Theta_m)).$$
Moreover $\Theta_m \le \Theta:=\Delta_Y|_T$ implies $\Theta' :=
\lim_{m \rightarrow \infty} \Theta_m$ exists and $(T,\Theta')$ is
klt. This proves that the restricted algebra is an adjoint algebra.

\chapter{Existence of flips II} \label{corti}
 
This chapter is an exposition of work of Shokurov.

Recall (from Chapter~\ref{mustata})

\begin{defn}
Let $Y \rightarrow Z$ be a projective morphism with $Y$ smooth and $Z$ affine.
An \emph{adjoint algebra} is an algebra of the form 
$$\sR = \oplus_{m \ge 0} H^0(Y,N_m)$$
where $N_m = mk(K_Y +\Delta_m)$, the limit $\Delta := \lim_{m \rightarrow \infty} \Delta_m$ exists, and $(Y,\Delta)$ is klt. 
\end{defn}

\begin{rem}
Note that $\Delta$ can be an $\bR$-divisor.
\end{rem}

In Chapter~\ref{mustata}, it was shown that the ``restricted algebra" is an adjoint algebra.

\begin{defn}
Given an adjoint algebra $\sR(Y,N_{\bullet})$, set $M_i = \Mob N_i$, the mobile part of $N_i$, and $D_i=\frac{1}{i}M_i$.
We say $\sR$ is $a$-\emph{saturated} if there exists a $\bQ$-divisor $F$ on $Y$, $\lceil F \rceil \ge 0$,
such that $\Mob \lceil jD_i+F \rceil \le jD_j$ for all $i \ge j > 0$.
\end{defn}

\begin{rem} In applications, $F$ is always the discrepancy of some klt pair $(X,\Delta)$, $f \colon Y \rightarrow X$.
That is 
$$K_Y=f^*(K_X+\Delta)+F.$$
We sometimes write $F=\bA(X,\Delta)_Y$.
\end{rem}

\begin{ex} An $a$-saturated adjoint algebra on an affine curve is finitely generated.
 
Let $Y=\bC$ and $P=0 \in \bC$. Let $N_i = m_i \cdot 0$, where $m_i+m_j \le m_{i+j}$, and $D_i=\frac{1}{i}m_i \cdot 0 = d_i \cdot 0$.
By assumption $d= \lim_{i \rightarrow \infty} d_i \in \bR$ exists. In this context, $a$-saturation means there exists $b < 1$, $F=-b\cdot 0$, such that
$$\lceil jd_i-b \rceil \le jd_j$$
for all $i \ge j > 0$.

We want to show $d \in \bQ$, and $d=d_j$ for $j$ divisible.
Passing to the limit as $i \rightarrow \infty$ we get
$$\lceil jd-b \rceil \le jd_j.$$
Assume $d \notin \bQ$. Then 
$$\{ \langle jd \rangle \ | \ j \in \bN \} \subset [0,1]$$
is dense (here $\langle \cdot \rangle$ denotes the fractional part).
So, there exists $j$ such that $\langle jd \rangle > b$, and then
$$jd_j \le jd < \lceil jd -b \rceil \le jd_j,$$
a contradiction.
So $d \in \bQ$. The same argument shows that $d_j=d$ if $j \cdot d \in \bZ$.
\end{ex}

\begin{defn} An adjoint algebra $\sR(Y,N_{\bullet})$ is \emph{semiample} if the limit $D=\lim_{i \rightarrow \infty} \frac{1}{i}M_i$
is semiample, where $M_i:=\Mob(N_i)$.
\end{defn}

\begin{rem}
We say that an $\bR$-divisor $D$ on $Y$ is \emph{semiample} if there exists a morphism $f \colon Y \rightarrow W$ with $W$ quasiprojective such that
$D$ is the pullback of an ample $\bR$-divisor on $W$. We say an $\bR$-divisor is \emph{ample} if it is positive on the Kleiman--Mori cone of curves.
\end{rem}

\begin{thm}
If an adjoint algebra $\sR=\sR(Y,N_{\bullet})$ is $a$-saturated and semiample then it is finitely generated.
\end{thm}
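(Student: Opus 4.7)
The plan is to prove that $D$ is attained: there is a fixed $j \geq 1$ with $D = \frac{1}{j} M_j$. Once this is achieved, $M_j = jD$ is $\bQ$-Cartier and semiample and, after replacing $j$ by a suitable multiple, base-point-free; the remark from Chapter~\ref{mustata} on additive sequences with semiample rational limit then gives finite generation of $\bigoplus_m H^0(Y, M_{jm})$. Because the fixed part of $|N_{jm}|$ pulls out as a common factor of every global section, this equals $\bigoplus_m H^0(Y, N_{jm})$, which is the $j$-th truncation $\sR_{(j)}$; Remark~\ref{rem1} then promotes its finite generation to that of $\sR$.

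\textbf{Step 1 (limit form of saturation).} Fix $j \geq 1$ and let $i \to \infty$ in the saturation inequality $\Mob \lceil jD_i + F \rceil \leq jD_j$. Along sufficiently divisible subsequences $D_i$ increases to $D$, and since $\lceil \cdot \rceil$ is locally constant near the limiting values coefficient-by-coefficient, $\lceil jD_i + F \rceil$ stabilizes to $\lceil jD + F \rceil$ for $i \gg 0$. Hence
$$\Mob \lceil jD + F \rceil \leq jD_j \qquad \text{for every } j \geq 1. \qquad (\star)$$

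\textbf{Step 2 (rationality and stabilization).} In direct analogy with the curve example, suppose some coefficient $d := \mu_E D$ is irrational along a prime $E$, and set $\varphi := \mu_E F > -1$ (using $\lceil F \rceil \geq 0$). Dirichlet's theorem produces arbitrarily large $j$ with $\langle jd \rangle > -\varphi$, which forces $\lceil jd + \varphi \rceil > jd$. To contradict $(\star)$ along $E$ we need $E$ to lie outside the fixed part of $|\lceil jD + F \rceil|$, so that $\mu_E \Mob \lceil jD + F \rceil$ is exactly $\lceil jd + \varphi \rceil$. Semiampleness enters precisely here: writing $D = g^* H$ with $g \colon Y \to W$ and $H$ ample $\bR$-Cartier, approximate $H$ from below by ample $\bQ$-Cartier divisors $H'$ on $W$; for $j$ divisible enough the divisor $jg^*H'$ is base-point-free and sits below $\lceil jD + F \rceil$, and the resulting sections certify that $E$ is not in the fixed part. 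Then $(\star)$ forces $\lceil jd + \varphi \rceil \leq jd_j \leq jd$, contradicting $\lceil jd + \varphi \rceil > jd$. Hence every coefficient of $D$ is rational. Now choose $j$ with $jD$ integral and $jH$ very ample on $W$: then $jD = g^*(jH)$ is base-point-free, $\lceil jD + F \rceil = jD + \lceil F \rceil$, and $\Mob(jD + \lceil F \rceil) \geq jD$ since $\lceil F \rceil \geq 0$. Combined with $(\star)$, $jD \leq jD_j \leq jD$, and therefore $D_j = D$.

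\textbf{Main obstacle.} Step 2 is the crux: pushing the diophantine-approximation argument of the curve example through the $\Mob$ operator in higher dimension. The semiampleness hypothesis is used precisely to supply base-point-free rational approximations of $D$ from below, which control the fixed part of $|\lceil jD + F \rceil|$ along the relevant components of $D$; without semiampleness the $\Mob$ would be free to absorb the integrality discrepancy coming from Dirichlet and the contradiction would collapse. Shokurov's key insight is that saturation and semiampleness couple together rigidly enough to simultaneously force rationality of $D$ and stabilization of the sequence $\{D_i\}$.
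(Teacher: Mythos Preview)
Your overall strategy---pass to the limit in the saturation inequality, then use diophantine approximation together with semiampleness to force $D\in\bQ$ and $D=D_j$ for divisible $j$---is exactly the paper's approach. Step~1 and the stabilization argument at the end of Step~2 are fine. But the heart of Step~2, where you argue that $E$ is not in the fixed part of $|\lceil jD+F\rceil|$, has a real gap.

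You approximate $H$ \emph{from below} by an ample $\bQ$-divisor $H'$ and observe that $jg^*H'$ is free and sits inside $\lceil jD+F\rceil$. This only yields
\[
\mu_E\bigl(\Mob\lceil jD+F\rceil\bigr)\ \ge\ \mu_E(jg^*H')\ <\ jd,
\]
since $g^*H'\le D$ strictly along $E$. It does \emph{not} show that $E$ avoids the fixed part, and it gives no lower bound exceeding $jd$. The contradiction you want, namely $\mu_E\Mob\lceil jD+F\rceil=\lceil jd+\varphi\rceil>jd\ge jd_j$, therefore does not follow. Approximating from below can never overshoot $jd$, which is precisely what you need.

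The paper repairs this with the key lemma you omit: if $D$ is semiample but not a $\bQ$-divisor, then for any $\epsilon>0$ there exist $j>0$ and a \emph{free} integral divisor $M$ with $\|jD-M\|_{\sup}<\epsilon$ and $jD-M$ \emph{not effective}. In other words one can approximate $jD$ by a free $M$ that \emph{overshoots} along at least one prime. Taking $\epsilon<\min_P(1+\mu_P(F))$ forces $M\le\lceil jD+F\rceil$ (as $M$ is integral and $\mu_P(M)<j\mu_P(D)+\mu_P(F)+1$), whence $\Mob\lceil jD+F\rceil\ge M$; saturation $(\star)$ then gives $M\le jD_j\le jD$, contradicting the non-effectivity of $jD-M$. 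The point is that the diophantine approximation must be carried out on $W$ (where the ample cone is open, so one can perturb $H$ to a nearby rational ample class in any direction and still get freeness), not coefficient-by-coefficient on $Y$.
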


The proof is a modification of the one dimensional case, based on the following 

\begin{lem}
Let $Y \rightarrow Z$ be projective with $Y$ smooth and $Z$ affine and normal. Let $D$ be a semiample $\bR$-divisor on $Y$, and assume that $D$ is not a 
$\bQ$-divisor.
Fix $\epsilon > 0$.
There exists a $\bZ$-divisor $M$ and $j > 0$ such that
\begin{enumerate}
\item $M$ is free.
\item $\|jD-M\|_{\sup} < \epsilon$
\item $jD-M$ is not effective.
\end{enumerate}
\end{lem}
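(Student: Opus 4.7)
The plan is to approximate $jD$ by the pullback of a close integer divisor on $W$, using Diophantine approximation to place the resulting error on the non-effective side of zero. By semiampleness, write $D = g^*A$ for a morphism $g \colon Y \to W$ with $W$ quasiprojective and $A$ an ample $\bR$-divisor on $W$; since $D$ is not a $\bQ$-divisor, neither is $A$. Using openness of the ample cone together with Bertini I would write $A = \sum_{i=1}^{r} a_i A_i$ with $a_i > 0$ real and $A_1, \ldots, A_r$ distinct general irreducible very ample integral divisors on $W$; then at least one $a_i$, say $a_1$, is irrational. By generality of the $A_i$, for each $i$ I may also select a prime divisor $\widetilde A_i$ on $Y$ lying over $A_i$ that appears with positive multiplicity in $g^*A_i$ and with multiplicity zero in $g^*A_j$ for every $j \neq i$: any component of $g^*A_j$ not lying over $A_j$ is necessarily exceptional for $g$, and a general $A_j$ avoids the images of the finitely many exceptional divisors.

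The core step is the Diophantine claim that for any $\delta > 0$ there exist arbitrarily large positive integers $j$ and positive integers $n_1, \ldots, n_r$ with $|ja_i - n_i| < \delta$ for every $i$ and $ja_1 - n_1 < 0$. Granting this, define $M' := \sum_i n_i A_i$ and $M := g^*M'$. As a positive integer combination of very ample (hence base-point-free) divisors, $M'$ is base-point-free, and so is its pullback $M$, giving (1). Writing $g^*A_i = \sum_k c_{ik} E_k$ with $E_k$ the prime divisors of $Y$ and $c_{ik} \in \bZ_{\geq 0}$, and setting $C := \max_k \sum_i c_{ik}$, the divisor
$$jD - M \;=\; \sum_i (ja_i - n_i)\, g^*A_i \;=\; \sum_k\! \Bigl(\sum_i (ja_i - n_i)\, c_{ik}\Bigr) E_k$$
has sup norm at most $\delta C$, so choosing $\delta < \epsilon / C$ yields (2). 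Condition (3) is immediate: the coefficient of $\widetilde A_1$ in $jD - M$ equals $(ja_1 - n_1) \cdot \mult_{\widetilde A_1}(g^*A_1)$, which is strictly negative by construction.

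The main obstacle is the Diophantine claim, which I would establish via equidistribution. Let $N$ be the least common multiple of the denominators of the rational $a_i$, so $ja_i \in \bZ$ whenever $N \mid j$; for such $i$, set $n_i := ja_i$ to kill the error. Let $I \subseteq \{1, \ldots, r\}$ denote the set of irrational indices (containing $1$) and consider the closed subgroup $H \subseteq (\bR/\bZ)^{|I|}$ generated by $(N a_i)_{i \in I} \bmod 1$. Its first-coordinate projection to $\bR/\bZ$ is surjective, since $Na_1$ is irrational and the closed subgroup it generates is dense; as a continuous surjective homomorphism of compact abelian groups this projection is open, so small neighborhoods of $0 \in H$ project to full two-sided neighborhoods of $0 \in \bR/\bZ$. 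Hence $H$ meets the one-sided region in which the first coordinate lies in $(1-\delta, 1) \bmod 1$ and the remaining coordinates lie in $(-\delta, \delta) \bmod 1$; Weyl equidistribution of the cyclic orbit then supplies $k \in \bN$ with $(kNa_i)_{i \in I} \bmod 1$ in such a region. Taking $j := Nk$, $n_1 := \lceil j a_1 \rceil$, and $n_i := $ the nearest integer to $ja_i$ for the remaining $i \in I$ completes the proof of the claim.
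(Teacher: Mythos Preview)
The paper does not prove this lemma; it is stated without proof as the Diophantine input to the preceding theorem on finite generation of $a$-saturated semiample adjoint algebras. Your argument is the standard Shokurov-type approximation and is correct: write the ample $\bR$-divisor $A$ on $W$ as a positive real combination $\sum a_iA_i$ of general very ample integral divisors, use the group structure of the orbit closure $H\subset(\bR/\bZ)^{|I|}$ together with surjectivity and openness of the first projection to locate a point of $H$ with first coordinate in $(1-\delta,1)$ and the others in $(-\delta,\delta)$, approximate it by a positive multiple of the generator, and pull back.

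One sentence is garbled. ``Any component of $g^*A_j$ not lying over $A_j$'' is vacuous, since every component of $g^*A_j$ maps into $A_j$. What your argument actually uses is that a component of $g^*A_1$ which is \emph{also} a component of some $g^*A_j$ with $j\neq 1$ must map into $A_1\cap A_j$, hence into codimension $\ge 2$ on $W$; such ``$g$-exceptional'' prime divisors on $Y$ are indeed finite in number (they are contained in $g^{-1}(Z)$, where $Z\subsetneq W$ is the closed locus over which the fibre dimension of $g$ exceeds the generic value, and $g^{-1}(Z)$ is a proper closed subset of $Y$), so a general $A_1$ avoids their images and every component of $g^*A_1$ serves as $\widetilde A_1$. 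With that rewording the proof goes through; note also that only $\widetilde A_1$ is needed for (3), so the parallel choice of $\widetilde A_i$ for $i>1$ is superfluous.
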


\begin{thm}
Assume the MMP in dimension $n$ (precisely, MMP with scaling for klt pairs with $\bR$-coefficients. For the definition of MMP with scaling see Chapter~\ref{kollar}). Let $\sR=\sR(Y,N_{\bullet})$ be an adjoint algebra. Let $N_i=ik(K_Y+\Delta_i)$, 
$\Delta=\lim_{i \rightarrow \infty} \Delta_i$, and assume $K_Y+\Delta$ big. Then there exists a modification $\pi \colon Y' \rightarrow Y$ and $N'_{\bullet}$ such that
$\sR=\sR(Y',N'_{\bullet})$ is semiample.
\end{thm}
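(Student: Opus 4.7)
The plan is to run the MMP with scaling on the klt pair $(Y,\Delta)$ in order to reach a minimal model where $K+\Delta$ becomes semiample, then to transport the adjoint algebra back via a common log resolution. Since $K_Y+\Delta$ is big and $(Y,\Delta)$ is klt, the assumed MMP produces a birational contraction $\varphi\colon Y\dashrightarrow \overline Y$---a composition of $(K+\Delta)$-negative divisorial contractions and flips---terminating at a minimal model on which $K_{\overline Y}+\overline\Delta$ is nef, where $\overline\Delta:=\varphi_*\Delta$. Because this divisor is nef and big on the klt pair $(\overline Y,\overline\Delta)$, the base-point-free theorem (part of the MMP package) upgrades it to semiample.

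Next, take a common log resolution $\pi\colon Y'\to Y$, $\rho\colon Y'\to \overline Y$ with $Y'$ smooth, and define a new adjoint sequence on $Y'$ by writing
\[
K_{Y'}+\Delta'_i=\pi^*(K_Y+\Delta_i)+E_i,\qquad N'_i:=ik(K_{Y'}+\Delta'_i),
\]
with $E_i$ effective and $\pi$-exceptional. Since $E_i$ is effective and $\pi$-exceptional, $H^0(Y',N'_i)=H^0(Y,N_i)$, the sequence $\{N'_i\}$ remains additive, and the limit $\Delta':=\lim\Delta'_i$ exists with $(Y',\Delta')$ klt. Thus $\sR(Y',N'_{\bullet})\cong\sR(Y,N_{\bullet})$ as adjoint algebras.

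To verify semiampleness of the new algebra, we compare sections on $Y'$ with those on $\overline Y$. Each MMP step is $(K+\Delta)$-negative, and since $\Delta_i\to\Delta$, every such step is also $(K+\Delta_i)$-nonpositive for $i\gg 0$; hence
\[
H^0(Y,N_i)\cong H^0(\overline Y,\,ik(K_{\overline Y}+\overline\Delta_i)),
\]
where $\overline\Delta_i:=\varphi_*\Delta_i\to\overline\Delta$. Because $k(K_{\overline Y}+\overline\Delta)$ is semiample, the asymptotic fixed part of $ik(K_{\overline Y}+\overline\Delta_i)$ becomes negligible as $i\to\infty$, so $\tfrac{1}{i}\Mob$ on $\overline Y$ converges to $k(K_{\overline Y}+\overline\Delta)$. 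Pulling back via $\rho$, the limit $D'=\lim\tfrac{1}{i}\Mob(N'_i)$ equals $\rho^*k(K_{\overline Y}+\overline\Delta)$, which is semiample.

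The hardest part will be controlling the identification of adjoint algebras uniformly across the MMP steps: the $\Delta_i$ only tend to $\Delta$, so a priori a component of some $\Delta_i$ could be contracted by a divisorial step for $(Y,\Delta)$ that was not expected from the viewpoint of $\Delta_i$. The saving grace is that the negativity condition needed to preserve $H^0(N_i)$ is open, so it persists for all sufficiently large $i$; a parallel care is required when comparing asymptotic fixed parts on $\overline Y$, which is where semiampleness (not merely nefness) of $K_{\overline Y}+\overline\Delta$ is essential.
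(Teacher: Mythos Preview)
Your overall plan—run the MMP on $(Y,\Delta)$, pass to a common resolution, and identify the limit of the normalized mobile parts with the pullback of the semiample $k(K_{\overline Y}+\overline\Delta)$—is in the right spirit, but it diverges from the paper's approach at exactly the point you flag as ``the hardest part,'' and the gap there is real.

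The paper does not argue via a single minimal model for $(Y,\Delta)$. Instead it invokes the finiteness-of-models lemma: there exist $\epsilon>0$ and finitely many birational maps $g_j\colon Y\dashrightarrow W_j$ such that every $(Y,D)$ with $\|D-\Delta\|<\epsilon$ has some $(W_j,g_{j*}D)$ as a log minimal model. One then takes $Y'$ to be a common resolution of $Y$ and all the $W_j$. The point is that for each large $i$ there is a model $W_{j(i)}$ on which $K_{W_{j(i)}}+g_{j(i)*}\Delta_i$ is actually nef, hence semiample by base-point-freeness; this gives honest control of $\Mob(N_i)$ for each $i$, and since there are only finitely many $W_j$ one can compare everything on a single $Y'$.

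Your argument, by contrast, produces only $\overline Y$, the minimal model of $(Y,\Delta)$. On $\overline Y$ the divisor $K_{\overline Y}+\overline\Delta_i$ need not be nef, and the assertion that ``the asymptotic fixed part of $ik(K_{\overline Y}+\overline\Delta_i)$ becomes negligible'' is exactly what requires proof. Knowing that $k(K_{\overline Y}+\overline\Delta)$ is semiample gives you free linear systems $|iL|$, but $\overline N_i=iL-ik(\overline\Delta-\overline\Delta_i)$ sits \emph{below} $iL$, and subtracting an effective divisor from a free system can create a fixed part whose size you have not bounded. The openness argument you give does show that for $i\gg0$ each step of the $(K+\Delta)$-MMP is also $(K+\Delta_i)$-negative, so $H^0$ is preserved; but it does not show that $\overline Y$ is a minimal model for $(Y,\Delta_i)$, which is what you would need to conclude $\Mob(\overline N_i)=\overline N_i$. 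The finiteness lemma is precisely the device that closes this gap.
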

\begin{rem}
In the context of flips, the condition $K_Y+\Delta$ big is not an issue.
\end{rem}
The proof of the theorem is based on the following  

\begin{lem}
Let $(X,\Delta)$ be a klt pair, where $X$ is $\bQ$-factorial, $\Delta$ is an $\bR$-divisor, and $K_X+\Delta$ is big. Assume the MMP in dimension $n$.
Let $\Delta \in V \subset \Div_{\bR} X$ be a finite dimensional vector space. There exist $\epsilon > 0$ and finitely many $g \colon X \dashrightarrow W_i$
(birational maps) such that if $D \in V$, $\|D-\Delta\| < \epsilon$, then for some $i$ the pair $(W_i,{g_i}_*D)$ is a log minimal model of $(X,D)$.
\end{lem}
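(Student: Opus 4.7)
The plan is to run the MMP with scaling simultaneously for all $D$ in a small neighborhood of $\Delta$ in $V$ and show that only finitely many sequences of extremal contractions and flips arise. First I would observe that being klt and having $K_X + D$ big are open conditions on $D \in V$, so after shrinking $\epsilon$ we may assume $(X,D)$ is klt with $K_X + D$ big for every $D$ with $\|D-\Delta\| < \epsilon$; by the assumed MMP with scaling in dimension $n$, a log minimal model then exists for each such $D$, and the content of the lemma is that only finitely many birational models $W_i$ arise.

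Next, I would choose an ample $\bR$-divisor $A$ on $X$ large enough that $K_X + \Delta + A$ is ample, and shrink $\epsilon$ further so that $K_X + D + A$ remains ample for every $D$ in the ball. For each such $D$ I run the $(K_X+D)$-MMP with scaling of $A$: at step $i$ one contracts or flips an extremal ray $R_i(D)$ realizing the nef threshold
$$t_i(D) := \inf\{\, t \ge 0 : K_X + D + tA \text{ is nef}\,\}.$$
The key observation is that the set of $D$ (in the current neighborhood) for which a prescribed extremal ray $R$ is chosen at step $i$ forms a rational polyhedral piece of $V$, cut out by the finitely many numerical conditions specifying $R$; by the Cone theorem only finitely many extremal rays of bounded $A$-degree are relevant, so only finitely many such polyhedral pieces meet any small ball around $\Delta$.

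The argument is then by induction on the length of the MMP that terminates $(X,\Delta)$. Partition a small ball around $\Delta$ into finitely many rational polyhedral cells on each of which the first MMP step is the same birational operation $X \dashrightarrow X_1$. On each cell the new pair $(X_1, D_1)$, with $D_1$ the pushforward of $D$, is klt with $K_{X_1}+D_1$ big, and the strict transform $A_1$ of $A$ is still ample on $X_1$ (this is precisely the point of scaling); the inductive hypothesis applied inside the corresponding cell yields a finite list of further birational maps. Composing with the first step across all cells produces the desired finite collection $\{g_i \colon X \dashrightarrow W_i\}$, and by construction every $D$ in some small ball around $\Delta$ has $(W_i, (g_i)_* D)$ as a log minimal model for some $i$.

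The main obstacle I expect is controlling the MMP uniformly in $D$: termination is hypothesized only for each individual pair, so one must argue that the process stabilizes after a uniformly bounded number of steps across a whole neighborhood. The polyhedral decomposition above handles this, provided the inductive depth is bounded by the length of the MMP for $(X,\Delta)$ itself and the neighborhood is shrunk only finitely many times---both are automatic since the induction terminates. A secondary technical point is that the pushforward and strict transform operations used at each step must send $V$ into a finite-dimensional vector space on the next model and preserve the small-perturbation property; this is true because $(g_i)_*$ is a linear map on $\bR$-divisors supported on a fixed set.
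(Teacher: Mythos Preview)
Your induction does not terminate as stated. You induct on the length $N$ of the MMP with scaling for $(X,\Delta)$, but after performing the first step for some nearby $D$ the resulting model $X_1$ need not have a shorter MMP for $\Delta_1$. Concretely, if several extremal rays sit on the threshold wall for $(X,\Delta)$, different cells of your partition will contract different rays, and for a cell whose ray $R$ satisfies $R\cdot(K_X+\Delta)=0$ the map $X\dashrightarrow X_1$ is not a step of the $(K_X+\Delta)$-MMP at all, so the length does not drop. The base case already shows the problem: if $N=0$, i.e.\ $K_X+\Delta$ is nef, then for $D$ arbitrarily close to $\Delta$ the divisor $K_X+D$ can fail to be nef, and you must still run an MMP for $(X,D)$ with nothing left to induct on. (A smaller point: the strict transform $A_1$ of an ample divisor through a flip is not ample, only big; this is fixable but your parenthetical justification is wrong.)

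The paper's proof (given in Chapter~\ref{kollar}, Section~5) avoids this by inducting on $r=\dim V$ rather than on any MMP length. One first constructs the minimal model $(X^m,\Delta^m)$ of the \emph{center} $(X,\Delta)$; base point freeness gives $g\colon X^m\to X^c$ with $K_{X^m}+\Delta^m\sim g^*(\text{ample})$. For $|t_i|\ll 1$ the further MMP needed for $\Delta^m+\sum t_iD_i$ is relative to $X^c$, and locally over $X^c$ one has $K_{X^m}+\Delta^m\sim 0$. Hence $(X^m,\Delta^m+\sum t_iD_i)^{\min}=(X^m,\Delta^m+c\sum t_iD_i)^{\min}$ for any $c>0$, so one may rescale $(t_1,\dots,t_r)$ onto a face of the cube $[-\epsilon,\epsilon]^r$; the faces are $(r{-}1)$-dimensional and the induction closes. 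The key mechanism---homogeneity after passing to the canonical model---has no analogue in your argument.
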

For the proof of the lemma see Chapter~\ref{kollar}.

That the restricted algebra is $a$-saturated can be proved by a straightforward application of Kawamata--Viehweg vanishing as follows.

\begin{thm}
If $-(K_X+S+\Delta)$ is big and nef, then the restricted algebra is $a$-saturated.
\end{thm}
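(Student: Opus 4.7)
Take $F := E|_T$, where $K_Y + T + \Delta_Y = f^*(K_X + S + \Delta) + E$ is the log resolution identity with $E$ effective and exceptional. Restricting to $T$ via adjunction gives $K_T + \Theta_Y = g^*(K_S + \Delta|_S) + F$ with $g = f|_T$, so $F$ is the log discrepancy of the klt pair $(S,\Delta|_S)$ (klt by adjunction from $(X,S+\Delta)$ plt); in particular $\lceil F \rceil \ge 0$. This $F$ is the candidate for the divisor in the $a$-saturation condition.

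Fix $i \ge j > 0$. The plan is, given any $\sigma \in H^0(T, \lceil jD_i + F\rceil)$, to lift $\sigma$ to a section on $Y$ whose mobile part restricts to at most $M_j = jD_j$ on $T$. Because $E|_T = F$ and $D_{Y,i}|_T = D_i$, the divisor $\lceil jD_{Y,i} + E\rceil$ on $Y$ restricts to $\lceil jD_i + F\rceil$ on $T$. The core of the argument is to exhibit a divisor $L_Y$ on $Y$ satisfying
\begin{enumerate}
\item $L_Y|_T \ge \lceil jD_i + F\rceil$,
\item $L_Y = k'(K_Y + T + \Gamma)$ with $\Gamma \sim_\bQ A' + B'$, $A'$ ample, $B'$ effective with $T \not\subset \Supp B'$, and the snc / no-component-in-base-locus hypotheses of Theorem~\ref{thm2} are met, and
\item the mobile part of $L_Y$ on $Y$ is contained in $M_{Y,j}$, the mobile part of $N_{Y,j} = jk(K_Y + T + \Delta_{j,Y})$.
\end{enumerate}

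The role of the hypothesis that $-(K_X+S+\Delta)$ is big and nef is precisely to supply the ample summand in (b): on $Y$, $-f^*(K_X+S+\Delta)$ is big and nef, hence $\bQ$-linearly equivalent to $A_0 + B_0$ with $A_0$ ample and $B_0$ effective not containing $T$ (using that $T$ is not in the stable base locus of $k(K_X+S+\Delta)$). Folding a small multiple of this decomposition into $\Gamma$ yields the ample component. Once $L_Y$ is arranged to satisfy (a)--(c), Theorem~\ref{thm2} lifts $\sigma$ to $\tilde\sigma \in H^0(Y, L_Y)$; condition (c) forces $\divr(\tilde\sigma) \ge L_Y - M_{Y,j}$, and restricting to $T$ yields $\divr(\sigma) \ge \lceil jD_i + F\rceil - M_j$, i.e. $\Mob\lceil jD_i + F\rceil \le jD_j$, as required.

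The main obstacle is realizing (a), (b), (c) simultaneously. One must establish a comparison on $Y$ of the shape
\[
\lceil jD_{Y,i}\rceil + \lceil E\rceil + T \;\le\; jk(K_Y+T+\Delta_{j,Y}) + (\text{ample}) + (\text{effective, not mobile}),
\]
using $D_{Y,i} \le k(K_Y+T+\Delta_{i,Y})$ together with the ample--plus--effective splitting of $-f^*(K_X+S+\Delta)$ to produce the ample contribution on the right, while the rounding implicit in $\lceil \cdot \rceil$ is absorbed by the klt slack $\lceil F\rceil \ge 0$. The snc and base-locus hypotheses of the extension theorem are then arranged by working on the higher model built in Steps~2--5 of Chapter~\ref{mustata}, where $T$ has been separated from the other components of $\Delta_Y$ and intersections sitting in the base locus have been blown up. These verifications are delicate but essentially bookkeeping; the conceptual heart of the proof is that big+nefness of $-(K_X+S+\Delta)$ gives exactly the ample slack needed to turn the extension theorem into a saturation statement.
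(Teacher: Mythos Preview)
Your approach differs substantially from the paper's, and the route you take is both heavier than necessary and contains a real gap.

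The paper does \emph{not} invoke the Hacon--McKernan extension theorem (Theorem~\ref{thm2}) here at all. It applies Kawamata--Viehweg vanishing directly to the restriction sequence
\[
0 \to \cO_Y\bigl(M_j + \bA(X,S+\Delta)_Y\bigr) \to \cO_Y(M_j + E) \to \cO_T(M_j^0 + F) \to 0,
\]
with $E = \bA(X,S+\Delta)_Y + T$ and $F = E|_T = \bA(S,\Delta|_S)_T$. The left term equals $K_Y + \bigl(M_j - f^*(K_X+S+\Delta)\bigr)$; since $M_j$ is free and $-f^*(K_X+S+\Delta)$ is big and nef, their sum is big and nef, so $H^1$ vanishes. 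This makes the restriction map surjective, and combining with the elementary observation $\Mob\lceil M_j + E\rceil \le M_j$ (because $\lceil E\rceil$ is effective and $f$-exceptional) gives $\Mob\lceil M_j^0 + F\rceil \le M_j^0$ immediately. The big+nef hypothesis is used exactly once, to feed Kawamata--Viehweg; no model-building beyond a single log resolution, and no extension machinery, is needed. (Note also that the correct $F$ is $\bA(S,\Delta|_S)_T$, i.e.\ the discrepancy of $(S,\Delta|_S)$; your $E|_T$ differs from this by $\Delta_Y|_T$.)

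Your proposal, by contrast, routes the lifting through Theorem~\ref{thm2}, and the conditions (b) and (c) you impose on $L_Y$ pull against each other: (b) asks for an ample summand in $\Gamma$, which enlarges the mobile part of $L_Y$, while (c) asks that mobile part to stay below $M_{Y,j}$. You wave this off as ``bookkeeping'', but no mechanism is offered, and Theorem~\ref{thm2} only applies to divisors of the special shape $k'(K_Y+T+\Gamma)$, which $\lceil jD_{Y,i} + E\rceil$ is not. The paper's point is precisely that the lifting needed for $a$-saturation is a \emph{single} cohomology vanishing on the adjunction sequence, not an iterated extension; using Theorem~\ref{thm2} here is circular in spirit, since that theorem is itself built out of repeated applications of the same vanishing.
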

\begin{proof}
Let $(X,S+\Delta)\to Z$ be a pl-flipping contraction, and consider a birational morphism
$f \colon Y \rightarrow X$. Let $T$ denote the strict transform of $S$. Write 
$$K_Y+T+\Delta_Y=f^*(K_X+S+\Delta)+E,$$ 
where $\Delta_Y$ and $E$ are effective (with no common components) and $E$ is exceptional. 
Assume $k(K_X+S+\Delta)$ is integral and Cartier. 
Let $\Delta_m$ be the largest divisor such that $0\le \Delta_m \le \Delta$ and 
$$M_m:=\Mob( mk(K_Y+T+\Delta_m))=\Mob (mk(K_Y+T+\Delta)).$$ 
We may assume $M_m$ is free (for some $f$).
Write $M_m^0=M_m|_T$.
For simplicity we only consider the case $i=j$, i.e., we prove
$$\Mob\lceil M_j^0+F \rceil\le M_j^0,$$ 
where $F=\bA(S,\Delta|_S)_T$ . 

If $\lceil E \rceil$ is an $f$-exceptional divisor, then $\Mob \lceil M_j+E \rceil \le M_j$. 
Choose $E=\bA(X,S+\Delta)_Y+T$, so $E|_T=F$, and consider the short exact sequence
$$0\to \cO_Y (M_j+\bA(X,S+\Delta)_Y)\to \cO_Y (M_j+ E) \to \cO_T(M_j^0+ F) \to 0.$$
By the Kawamata-Viehweg vanishing theorem and our assumption, we know $H^1(\cO_Y (M_j+\bA(X,S+\Delta)_Y))=0$ (because $\bA(X,S+\Delta)_Y=K_Y-f^*(K_X+S+\Delta)$). Now the $a$-saturated condition $\Mob \lceil M_j^0+F \rceil \le M_j^0$ is implied by $\Mob \lceil M_j+E \rceil \le M_j$ and this extension result.

\end{proof}

\chapter{Notes on Birkar-Cascini-Hacon-M\textsuperscript{c}Kernan} \label{kollar}
By the previous 3 lectures, we can start with the:
\medskip

{\bf Assumption:} We  proved the existence of flips in dimension $n$, using  
minimal models in dimension $n-1$.
\medskip

The main result is the following:

\begin{thm}[Minimal models in dimension $n$]
Assume that  $\Delta$ is a big $\r$-divisor,
$(X,\Delta)$ is klt and $K+\Delta$ is pseudo-effective.
Then $(X,\Delta)$ has a minimal model.
\end{thm}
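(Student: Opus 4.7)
The plan is to construct the minimal model by running an MMP with scaling, relying on the existence of flips in dimension $n$ (established in the previous chapters) and MMP in dimension $n-1$ to force termination. Since $\Delta$ is big, write $\Delta \sim_{\bR} A + E$ with $A$ an ample $\bR$-divisor and $E \geq 0$; for $0 < \varepsilon \ll 1$ replace $\Delta$ by $(1-\varepsilon)\Delta + \varepsilon(A+E)$, so that we may assume $\Delta = A + B$ with $A$ ample and $B$ effective without changing the pair up to klt-preserving perturbation. A first nontrivial ingredient I would need is \emph{non-vanishing}: that $K_X + \Delta$ is $\bR$-linearly equivalent to some effective divisor. I would prove this inductively in dimension: on a log resolution, use the ample component of $\Delta$ to produce a suitable smooth divisor $T$, apply the Hacon--McKernan extension theorem (Theorem~\ref{thm2}) to lift sections from $T$ to $X$, and obtain the required sections on $T$ via adjunction together with MMP in dimension $n-1$.

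\emph{MMP with scaling.} Fix an ample $\bR$-divisor $H$ such that $K_X + \Delta + H$ is nef, and at step $i$ set
\[
\lambda_{i+1} = \inf\bigl\{\, t \geq 0 \,:\, K_{X_i} + \Delta_i + tH_i \text{ is nef}\,\bigr\}.
\]
If $\lambda_{i+1} = 0$ then $X_i$ is a minimal model and we stop; otherwise a $(K_{X_i}+\Delta_i)$-negative extremal ray $R_i$ satisfies $(K_{X_i}+\Delta_i+\lambda_{i+1}H_i)\cdot R_i = 0$, and we contract $R_i$ (if divisorial) or replace $X_i$ by its flip (which exists by assumption, if small). The resulting pair stays $\bQ$-factorial klt, $K+\Delta$ stays big, and $\lambda_i$ is a non-increasing sequence of non-negative reals.

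\emph{Termination -- the main obstacle.} The heart of the proof is to show that the sequence above is finite. I would combine two ingredients. \emph{Special termination}, obtained by applying the inductive MMP in dimension $n-1$ to the adjunction pair on each component of $\lfloor\Delta\rfloor$, shows that after finitely many steps the flipping loci are disjoint from the strict transforms of all components of $\lfloor\Delta\rfloor$, reducing us to the situation where every boundary coefficient is $<1$. In that regime, the divisors $\Delta_i + \lambda_i H_i$ all lie in the fixed finite-dimensional subspace $V \subset \Div_{\bR}(X)$ spanned by the components of $\Delta + H$, and $\lambda_i$ converges to some limit $\lambda_\infty$. The finiteness-of-models lemma from Chapter~\ref{corti} then guarantees that the log minimal models of all $D \in V$ sufficiently close to $\Delta + \lambda_\infty H$ lie in a fixed finite list $W_1,\ldots,W_N$. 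Since each MMP step produces a strictly different birational model, pigeonhole contradicts the supposed infinitude of the sequence, forcing termination; the terminal $X_N$ is the desired minimal model of $(X,\Delta)$.
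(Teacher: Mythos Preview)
Your argument is circular. The ``finiteness-of-models lemma from Chapter~\ref{corti}'' that you invoke in the termination step explicitly \emph{assumes} the MMP in dimension $n$; in the paper's spiraling induction, finiteness of models is deduced \emph{from} the existence of minimal models in dimension $n$, not used to prove it. So at the point where you appeal to it, the lemma is not yet available.

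There is a second, related gap. Since $(X,\Delta)$ is klt, $\lfloor\Delta\rfloor=0$, and special termination is vacuous for the original pair: there is no boundary for the flipping loci to eventually avoid. Your sentence ``reducing us to the situation where every boundary coefficient is $<1$'' is therefore not a reduction at all---you start there. The paper's whole point (the ``Bending it like BCHM'' maneuver in Section~4) is to manufacture a nontrivial $\lfloor\Delta^+\rfloor$ by adding the stable base locus to $\Delta$ with coefficient $1$ (the Useless Divisor Lemma shows this does not change the minimal model), and then to further add mobile divisors $M$ so that, when one scales first by an ample $H$ and then by $M$, the Scaling-to-the-Boundary Lemma forces every extremal ray to meet $\lfloor\Delta^+ + M\rfloor$. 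Special termination---which needs only MMP in dimension $n-1$---then terminates the process outright, with no appeal to finiteness of models. That is the missing idea in your proposal.
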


Let us first see some corollaries.

\begin{cor} $\Delta$ is a big $\q$-divisor,
$(X,\Delta)$ is klt and $K+\Delta$ is pseudo-effective.
Then the canonical ring
$$
\sum_{m\geq 0} H^0(X, \o_X(mK_X+\rdown{m\Delta})) \qtq{is finitely generated.}
$$
\end{cor}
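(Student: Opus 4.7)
The plan is to deduce the corollary from the stated Minimal Models Theorem via the base point free theorem, after reducing to a truncated (integral) section ring.

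First I would reduce to finite generation of
\[
R := \bigoplus_{m\geq 0} H^0(X, \o_X(mk(K_X+\Delta))),
\]
where $k$ is chosen so that $k\Delta$ is integral (and hence so that $kK_X+k\Delta$ is an integral Weil divisor). Indeed, the $k$-th Veronese of the canonical ring $\sum_{m\geq 0} H^0(X, \o_X(mK_X+\rdown{m\Delta}))$ is precisely $R$ (for $m$ divisible by $k$, $\rdown{m\Delta}=m\Delta$), so by Remark~\ref{rem1} it suffices to show $R$ is finitely generated.

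Next, I apply the Minimal Models Theorem in dimension $n$ to the klt pair $(X,\Delta)$: since $\Delta$ is big and $K_X+\Delta$ is pseudo-effective, there exists a minimal model $\varphi \colon X \map X'$ with $(X',\Delta')$ klt (where $\Delta':=\varphi_*\Delta$) and $K_{X'}+\Delta'$ nef. The MMP is a composition of divisorial contractions and flips, and each such step preserves sections of $\o(mk(K+\Delta))$: divisorial contractions contract divisors of negative discrepancy (so sections push forward isomorphically by the usual klt computation), and flips leave the canonical ring unchanged by construction (they are isomorphisms in codimension one, and the flipped algebra is the Proj of the same ring). Therefore
\[
R \;\cong\; \bigoplus_{m\geq 0} H^0(X', \o_{X'}(mk(K_{X'}+\Delta'))).
\]

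Now $K_{X'}+\Delta'$ is nef, and it remains big because bigness is a birational invariant for the $\varphi_*$-image in the context of a divisorial/flipping MMP (equivalently, $\Delta'$ is big on $X'$ since $\Delta$ was big on $X$ and $\varphi$ is birational). By the base point free theorem for klt pairs, a nef and big $\bQ$-Cartier divisor of the form $K_{X'}+\Delta'$ (with $(X',\Delta')$ klt) is semiample. Hence $mk(K_{X'}+\Delta')$ is a multiple of a globally generated line bundle for $m$ sufficiently divisible, and the section ring of a semiample integral divisor is finitely generated (it is a finite extension of a polynomial ring over the coordinate ring of the image of the induced morphism). This gives finite generation of $R$, and hence of the original canonical ring.

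The main obstacle, beyond invoking the just-proved Minimal Models Theorem, is step three: verifying that the section algebra is unchanged under the steps of the MMP. For divisorial contractions this is a direct discrepancy computation (the contracted divisor has negative log discrepancy), while for flips it follows from the very definition of a flip as the Proj of the relative log canonical algebra. Both are standard but should be checked to confirm that $R$ agrees with the corresponding ring on the minimal model, where semiampleness makes finite generation immediate.
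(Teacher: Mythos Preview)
Your proof follows exactly the paper's (terse) argument: pass to a minimal model and apply base point freeness. One correction: you assert that $K_{X'}+\Delta'$ ``remains big,'' but the hypothesis only gives $K_X+\Delta$ pseudo-effective, not big; what actually makes the base point free theorem apply on $X'$ is the bigness of $\Delta'$ (which you correctly note), via the standard trick of writing $\Delta'\sim_{\bQ}A+B$ with $A$ ample and absorbing $\epsilon A$ so that $K_{X'}+\Delta'-(K_{X'}+(1-\epsilon)\Delta'+\epsilon B)=\epsilon A$ is ample.
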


\begin{proof} 
Get minimal model for $(X,\Delta)$, then use base point freeness.
\end{proof}

\begin{cor}  $X$ smooth, projective  and $K_X$ is big. Then the canonical ring
$$
\sum_{m\geq 0} H^0(X, \o_X(mK_X)) \qtq{is finitely generated.}
$$
\end{cor}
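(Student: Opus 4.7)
The plan is to reduce to the previous corollary by producing a small boundary $\Delta$ so that $K_X+\Delta$ is $\bQ$-proportional to $K_X$, and then pass between the two canonical rings via the truncation trick (Remark 2.0.4 of the paper).

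\textbf{Step 1 (producing $\Delta$).} Since $K_X$ is big, Kodaira's lemma gives a positive integer $m_0$ and a linear equivalence $m_0 K_X \sim A+E$ with $A$ an ample integral divisor and $E$ an effective integral divisor. By Bertini I pick a general member $A'\in|A|$, so that $A'$ is smooth and $A'+E$ has snc support on the smooth variety $X$. For any rational $\epsilon>0$ set
\[
\Delta \;=\; \tfrac{\epsilon}{m_0}\bigl(A'+E\bigr).
\]
Then $\Delta\sim_{\bQ}\epsilon K_X$, so $\Delta$ is big and $K_X+\Delta\sim_{\bQ}(1+\epsilon)K_X$ is big, in particular pseudo-effective. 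For $\epsilon>0$ small enough the coefficients of $\Delta$ are $<1$, and because $X$ is smooth and $\Delta$ has snc support, the pair $(X,\Delta)$ is klt.

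\textbf{Step 2 (applying the previous corollary).} The hypotheses of the previous corollary are now met, so the ring
\[
R' \;=\; \bigoplus_{m\geq 0} H^0\bigl(X,\o_X(mK_X+\rdown{m\Delta})\bigr)
\]
is finitely generated.

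\textbf{Step 3 (comparing to the canonical ring via truncation).} Choose $\epsilon=1/N$ with $N$ a positive integer large enough that Step 1 goes through. Then $Nm_0\Delta = A'+E$ is an integral divisor linearly equivalent to $m_0K_X$. Taking the $Nm_0$-truncation of $R'$, the degree-$s$ piece becomes
\[
H^0\!\bigl(X,\,sNm_0 K_X+s(A'+E)\bigr)\;\cong\;H^0\!\bigl(X,\,sm_0(N{+}1)K_X\bigr),
\]
so $R'_{(Nm_0)}\cong R_{(m_0(N+1))}$, where $R=\sum_{m\geq 0}H^0(X,\o_X(mK_X))$ is the canonical ring. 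By Remark 2.0.4, finite generation of $R'$ implies finite generation of its truncation $R'_{(Nm_0)}$, which is isomorphic to the truncation $R_{(m_0(N+1))}$, and applying Remark 2.0.4 in the other direction to $R$ yields finite generation of $R$ itself.

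\textbf{Comments.} No step should present a real obstacle: Kodaira's lemma and Bertini are standard, and the klt check is immediate from smoothness plus snc support plus small coefficients. The only bit requiring mild care is bookkeeping in Step 3, namely choosing $\epsilon=1/N$ so that $Nm_0\Delta$ is an honest integral divisor $\bQ$-equivalent to a multiple of $K_X$, which is exactly what allows the two truncations to be identified.
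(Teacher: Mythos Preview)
Your argument is correct and shares the paper's core idea: manufacture a small klt boundary $\Delta$ that is $\bQ$-proportional to $K_X$, then reduce to the klt case already handled. One small slip in Step~1: Bertini for $A'\in|A|$ does not make $A'+E$ snc, since $E$ itself need not be snc; but this is harmless, because on a smooth $X$ the pair $(X,\epsilon D)$ is klt for \emph{any} effective $D$ once $\epsilon$ is small enough (positivity of the log canonical threshold), so your klt conclusion stands regardless.

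The paper's execution is slightly slicker and avoids your Step~3 bookkeeping. Instead of invoking the previous corollary and then matching truncations, it goes one level deeper: pick any effective $D\sim mK_X$ (no Kodaira's lemma needed, bigness already gives $|mK_X|\neq\emptyset$), apply the main theorem to get a minimal model for $(X,\epsilon D)$, and observe that this is automatically a minimal model for $X$ itself, since $K_X+\epsilon D\sim_{\bQ}(1+m\epsilon)K_X$. On that model $K_X$ is nef and big, so base point freeness gives finite generation of the canonical ring directly. Your route through the previous corollary plus truncation is perfectly valid; it just carries the floor $\rdown{m\Delta}$ along and then has to dispose of it, whereas the paper's route never sees a floor at all.
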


\begin{proof}
Pick some effective $D\sim mK_X$.
Then $(X,\epsilon D)$ is klt (even terminal) for
$0<\epsilon \ll 1$ and $\epsilon D$ is big.
So $(X,\epsilon D)$ has a minimal model.
It is automatically a
minimal model for $X$.
\end{proof}

\begin{cor} $X$ smooth, projective.
Then the canonical ring
$$
\sum_{m\geq 0} H^0(X, \o_X(mK_X)) \qtq{is finitely generated.}
$$
\end{cor}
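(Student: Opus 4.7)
The strategy is to reduce the statement to the previous corollary by means of the Iitaka fibration and the Fujino--Mori canonical bundle formula. If $\kappa(K_X)=-\infty$, the canonical ring equals $\bC$ and there is nothing to prove; if $\kappa(K_X)=\dim X$, then $K_X$ is big and the statement is the previous corollary. The remaining case is $0\le \kappa(K_X)<\dim X$.

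After replacing $X$ by a suitable smooth birational model (this does not change the canonical ring), one may assume that the Iitaka fibration is an honest morphism $f\colon X\to Y$ with connected fibers onto a smooth projective variety $Y$ of dimension $\kappa(K_X)$, whose general fiber has Kodaira dimension zero. The Fujino--Mori canonical bundle formula then provides, after a further birational modification of $Y$, a positive integer $d$ and an effective $\bQ$-divisor $\Delta$ on $Y$ such that $(Y,\Delta)$ is klt and
$$H^0(X,mdK_X)\;\cong\;H^0(Y,md(K_Y+\Delta)) \qquad \text{for every } m\ge 0.$$
Since $\kappa(K_X)=\dim Y$, the divisor $K_Y+\Delta$ is big. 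Hence the $d$-th truncation of the canonical ring of $X$ coincides with the log canonical ring of the klt pair $(Y,\Delta)$ with $K_Y+\Delta$ big.

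To invoke the first corollary of this chapter, I still need the boundary itself, and not merely $K_Y+\Delta$, to be big. Using bigness, pick an effective $\bQ$-divisor $E\sim_{\bQ}n(K_Y+\Delta)$ and, for $0<\epsilon\ll 1$, replace $\Delta$ by $\Delta':=\Delta+\epsilon E$. The pair $(Y,\Delta')$ is still klt; $\Delta'$ is big (it dominates the big divisor $\epsilon E$); and $K_Y+\Delta'\sim_{\bQ}(1+n\epsilon)(K_Y+\Delta)$, so the two log canonical rings agree up to a further truncation and rescaling. The first corollary of this chapter applied to $(Y,\Delta')$ yields finite generation, and propagating this conclusion back through the two truncations via Remark~\ref{rem1} of Chapter~\ref{mustata} gives finite generation of the canonical ring of $X$ itself. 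The main obstacle is the appeal to the Fujino--Mori canonical bundle formula, which lies outside the present lectures and is itself proved by an induction on dimension that feeds on the minimal model theory available one dimension down; once that formula is granted, the reduction above is routine.
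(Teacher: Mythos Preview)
Your proposal is correct and follows essentially the same approach as the paper: the paper's proof is the single sentence ``As in Kodaira's canonical bundle formula for elliptic surfaces, Fujino--Mori reduces the ring to a general type situation in lower dimension,'' and you have simply unpacked this into the standard case split and supplied the routine truncation bookkeeping (including the trick of adding $\epsilon E$ to make the boundary big so that the first corollary applies). One minor inaccuracy in your commentary: the Fujino--Mori canonical bundle formula does not itself rely on the MMP in lower dimension---it is proved via semipositivity and Hodge-theoretic methods---so your closing remark overstates the logical dependencies, though this has no bearing on the validity of the argument.
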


\begin{proof} As in Kodaira's canonical bundle formula for
elliptic surfaces, Fujino-Mori reduces the ring
to a general type situation in lower dimension.
\end{proof}

\begin{cor} Let $X$ be a Fano variety. Then the Cox ring
$$
\sum_{D\in \pic(X)} H^0(X, \o_X(D)) \qtq{is finitely generated.}
$$
\end{cor}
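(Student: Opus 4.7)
The plan is to reduce finite generation of $\mathrm{Cox}(X)$ to the preceding corollary on finite generation of canonical rings for klt pairs with big $\q$-boundary, using the Mori dream space framework of Hu--Keel.

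First I would verify that $\pic(X)$ is a finitely generated abelian group: since $X$ is Fano, Kodaira vanishing gives $H^1(X,\o_X)=0$, hence $\pic(X)=NS(X)$ is finitely generated. Choose generators $L_1,\dots,L_r$ of $\pic(X)$; since $-K_X$ is ample, after replacing each $L_i$ by $L_i+n_i(-K_X)$ for $n_i\gg 0$ (and adjoining $-K_X$ to the generating set) I may assume each $L_i$ is very ample. Fix a general member $D_i\in|L_i|$.

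Second, for any rational class $\lambda$ in the interior of the pseudo-effective cone, I would realize the one-parameter section ring $\bigoplus_m H^0(X,\lfloor m\lambda\rfloor)$ as the canonical ring of a suitable klt pair. The class $\Delta_\lambda:=\lambda-K_X$ is the sum of a pseudo-effective class and the ample class $-K_X$, hence is big; by Kodaira's lemma and a general-position argument (perturbing using general members of $|m(-K_X)|$ and of the $|L_i|$), I may choose an effective $\q$-representative $\Delta'_\lambda$ so that $(X,\Delta'_\lambda)$ is klt, $\Delta'_\lambda$ remains big, and $K_X+\Delta'_\lambda\sim_\q \lambda$. The preceding corollary then yields finite generation of the corresponding section ring.

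Third, to pass from finite generation along individual rays to finite generation of the multigraded Cox ring, I would invoke that the pseudo-effective cone of a Fano variety is rational polyhedral and admits a finite Mori chamber decomposition---equivalently, that $X$ is a Mori dream space. This follows from the MMP for Fano varieties, and in particular from the finiteness-of-models lemma in Chapter~\ref{kollar} applied to a finite-dimensional subspace $V\subset\Div_\r X$ containing the classes of the $D_i$ and of $-K_X$. Given the Mori dream space property, the theorem of Hu--Keel yields finite generation of $\mathrm{Cox}(X)$ directly.

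The main obstacle is the third step: ray-by-ray finite generation (which is immediate from the preceding corollary) does not automatically imply multigraded finite generation. What is needed is uniformity over bounded families of boundaries---the rational polyhedrality of the pseudo-effective cone together with the Mori chamber decomposition---and this is where the finiteness of log minimal models for boundaries varying in a finite-dimensional vector space becomes essential. The one-dimensional section rings produced in step two are merely the input; the real work is the combinatorial/birational bookkeeping that assembles them into a single finitely generated $\pic(X)$-graded algebra.
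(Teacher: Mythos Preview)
The paper's ``proof'' of this corollary is literally the two words ``See original,'' deferring entirely to \cite{BCHM}; there is no argument in these notes to compare against. Your outline is essentially the route taken in \cite{BCHM}: one shows that a $\q$-factorial klt Fano variety is a Mori dream space in the sense of Hu--Keel, and then invokes their equivalence between the Mori dream space property and finite generation of the Cox ring. The decisive input is exactly what you identify in step~3, namely the finiteness-of-models result from Chapter~\ref{kollar}, applied with the ample class $-K_X$ supplying the big boundary; this yields the finite Mori chamber decomposition of the movable (and pseudo-effective) cone that Hu--Keel require.

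Your step~2, by contrast, is largely superfluous: as you yourself observe in your final paragraph, ray-by-ray finite generation is strictly weaker than multigraded finite generation, and the Hu--Keel machinery does not need it as a separate input once the chamber decomposition is in hand. You could streamline the argument by going directly from step~1 to step~3. One technical point worth making explicit: to apply both the finiteness lemma and Hu--Keel cleanly you should assume $X$ is $\q$-factorial with klt singularities (or first pass to a small $\q$-factorialization), a hypothesis the notes leave implicit in the statement of the corollary.
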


\begin{proof} 
See original.
\end{proof}

\begin{cor} If $K+\Delta$ is not pseudo-effective, 
then there exists a birational map $X \dashrightarrow X'$ and a 
Mori fiber space
$X'\to Z'$.
\end{cor}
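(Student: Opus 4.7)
The plan is to manufacture the Mori fiber space by running a $(K_X+\Delta)$-MMP with scaling of an auxiliary ample divisor, and to use the non-pseudo-effectivity of $K_X+\Delta$ to rule out the ``minimal model'' alternative at the end. First, choose an ample $\bR$-divisor $H$ large enough that $(X,\Delta+H)$ is klt and $K_X+\Delta+H$ is nef (in fact ample). Since $K_X+\Delta$ is not pseudo-effective but $K_X+\Delta+H$ is, the threshold
$$\lambda_0 \;=\; \inf\{\,t\in[0,1] \,:\, K_X+\Delta+tH \text{ is nef}\,\}$$
is strictly positive.

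Next, I would run the $(K_X+\Delta)$-MMP with scaling of $H$. At the $i$-th step one has a $\bQ$-factorial klt model $(X_i,\Delta_i)$, the pushforward $H_i$ of $H$, and a scaling threshold $\lambda_i\in(0,\lambda_0]$, non-increasing in $i$, such that $K_{X_i}+\Delta_i+\lambda_iH_i$ is nef and trivial on a $(K+\Delta)$-negative extremal ray $R_i$. The existence of the required flips in dimension $n$ is exactly what was established in Chapters~\ref{mustata} and~\ref{corti}. Termination of this MMP with scaling for a klt pair with big boundary is the main technical output of the induction used to prove the Minimal Model Theorem above; applied to the auxiliary pair $(X,\Delta+\lambda_0 H)$, whose log canonical class is pseudo-effective and whose boundary is big, it shows that only finitely many $R_i$ can occur.

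Finally, I would read off the desired Mori fiber space from the last step. By hypothesis the MMP cannot stop with $K_{X_N}+\Delta_N$ nef, because pushing forward an effective representative would make $K_X+\Delta$ pseudo-effective; hence the process can only terminate when the extremal contraction associated to $R_N$ is of fiber type, producing $X'=X_N\to Z'$ with $\dim Z'<\dim X'$ and relative Picard number one, i.e.\ a Mori fiber space. The composition of the divisorial contractions and flips yields the birational map $X\dashrightarrow X'$. The main obstacle in this argument is termination of the MMP with scaling in the present non-pseudo-effective setting; once that is imported from BCHM (via its application to the perturbed pseudo-effective pair $(X,\Delta+\lambda_0 H)$), the rest of the proof is just the dichotomy ``nef output versus fiber contraction.''
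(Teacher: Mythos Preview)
Your approach is correct in outline but differs from the paper's, and the termination step is not justified by the auxiliary pair you chose.

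\medskip

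\textbf{Comparison with the paper.} The paper does not run a $(K_X+\Delta)$-MMP at all. Instead it fixes an ample $H$, takes the \emph{pseudo-effective} threshold $c=\inf\{t:K_X+\Delta+tH\text{ is pseudo-effective}\}>0$, and applies the Minimal Model Theorem directly to the klt pair $(X,\Delta+cH)$, which has big boundary and pseudo-effective log canonical class. On the resulting model $X'$ the divisor $K_{X'}+\Delta'+cH'$ is nef; it cannot be big, since then $K_X+\Delta+(c-\epsilon)H$ would still be effective, contradicting the minimality of $c$. Base point freeness then produces the fibration $X'\to Z'$. So the paper uses the main theorem as a black box together with a threshold/bigness argument, whereas you run the scaled MMP for $(X,\Delta)$ and wait for a fibre-type contraction. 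Your route has the advantage that the output automatically has relative Picard number one; the paper's route is shorter but, strictly speaking, only produces a fibration that one must then refine to a genuine Mori fibre space.

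\medskip

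\textbf{The gap.} Your termination argument invokes the theory for the pair $(X,\Delta+\lambda_0H)$, where $\lambda_0$ is the \emph{nef} threshold on $X$. This does not work: after the first step the scaling parameter drops to $\lambda_1\le\lambda_0$, and the ray $R_1$ is $(K+\Delta+\lambda_1H)$-trivial, hence $(K+\Delta+\lambda_0H)$-\emph{non-negative}; contracting it is not a step of any $(K+\Delta+\lambda_0H)$-MMP. The fix is exactly the paper's idea: use the pseudo-effective threshold $c$ instead. Every birational step with $\lambda_i>c$ is $(K+\Delta+cH)$-negative and $H$-positive, hence a step of a $(K+\Delta+cH)$-MMP with scaling, and \emph{that} MMP terminates because $(X,\Delta+cH)$ is klt with big boundary and pseudo-effective log canonical class. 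Once the threshold reaches $c$ you are on a minimal model of $(X,\Delta+cH)$ and the next contraction is the desired fibre space. With this correction your argument goes through.
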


\begin{proof} Fix $H$ ample and the smallest $c>0$ such that
 $K+\Delta+cH$ is  pseudo-effective. (Note that a priori, 
 $c$ may not be rational.) After MMP, we get $X\map X'$ such that
$K+\Delta+cH$ is nef on $X'$. It cannot be big since then
$K+\Delta+(c-\epsilon)H$ would still be effective.
So base point freeness gives $X'\to Z'$.
\end{proof}

\section{Comparison of 3 MMP's} 

In a minimal model program (or MMP)
we start with a pair $(X,\Delta)$ and the
goal is to  construct a minimal model $(X,\Delta)^{min}$.
(Note that minimal models are usually not unique, so
$(X,\Delta)^{min}$ is a not well defined notational convenience.)

There are 3 ways of doing it.

\subsection{Mori-MMP (libertarian)}  This is the by now classical
approach. 
Pick \emph{any}  extremal ray, contract/flip as
needed. The hope is that eventually we get $(X,\Delta)^{min}$.
This is known only in dimension $\leq 3$
and almost known in dimension 4.

Note that even if $(X,\Delta)$ is known to have a minimal model,
it is not at all clear that every Mori-MMP starting with
$(X,\Delta)$ has to end (and thus yield a  minimal model). 

\subsection{MMP with scaling (dictatorial)}

Fix $H$ and $t_0>0$ such that $K+\Delta+t_0H$ is nef. 

Let $t\to 0$. For a while $K+\Delta+tH$ is nef, but then
we reach 
a critical value $t_1\leq t_0$. 
That is, there exists an extremal ray $R_1$ such that
$R_1\cdot \bigl(K+\Delta+(t_1 -\eta) H\bigr)<0$ 
for $\eta>0$. Contract/flip this $R_1$ and continue.

We will show that MMP with scaling works in any dimension,
provided that $\Delta$ is big 
and $K+\Delta$ is pseudo-effective.

\subsection{Roundabout MMP of BCHM} 

Instead of directly going for a minimal model,
we start by steps that seem to make things more complicated.
Then we aim for the minimal model on a carefully chosen path.
There are 5 stages:

\begin{enumerate}
\item Increase $\Delta$ in a mild manner to  $\Delta^+$.
\item Increase $\Delta^+$ wildly to $\Delta^++M$.
\item Construct $(X,\Delta^++M)^{min}$.
\item Remove the excess $M$ to get $(X,\Delta^+)^{min}$.
\item Prove that $(X,\Delta^+)^{min}$ is also $(X,\Delta)^{min}$.
\end{enumerate}
You can imagine the process as in the picture:
$$
\begin{array}{cc}
\xymatrix{
& (X,\Delta^+{+}M) \ar@/^/[rr]|-{3} &&
(X,\Delta^+{+}M)^{min}  \ar@/^/[rd]|-{4} &\\
(X,\Delta) \ar@/^/[ru]|-{1,2\ \ } &&&&   (X,\Delta^+)^{min}
}\\
\mbox{Bending it like BCHM}
\end{array}
$$
(Note that the ``C'' of Cascini is pronounced
as a ``K''.)
We will show that bending  works in any dimension,
provided that $\Delta$ is big 
and $K+\Delta$ is pseudo effective.

\medskip
(Major side issues). My presentation below ignores
3 important points.
\begin{enumerate}
\item Difference between  klt/dlt/lc. The main results are for
klt pairs but many intermediate steps are needed
for dlt or lc. These are purely technical points
but in the past proofs collapsed on such technicalities. 
\item Relative setting. The induction and applications
sometimes need the relative case: dealing with
morphisms $X\to S$ instead of projective varieties.
The relevant technical issues are well understood. 
\item I do not explain how to prove the
{\it non-vanishing theorem}: If $K+\Delta$ is pseudo-effective
(that is, a limit of effective divisors) then
it is in fact effective. 
\end{enumerate}


\subsection{Spiraling induction}

We  prove 4 results together as follows:
$$
\boxed{\mbox{MMP  with scaling in dim.\ $n-1$}}
$$

$$
\hphantom{\mbox{Section 3}}\Downarrow\mbox{Section 3}
$$

$$
\boxed{\mbox{Termination  with scaling in dim.\ $n$ near $\rdown{\Delta}$}}
$$

$$
\hphantom{\mbox{Section 4}}\Downarrow\mbox{Section 4}
$$

$$
\boxed{\mbox{Existence of $(X,\Delta)^{min}$ in dim.\ $n$}}
$$

$$
\hphantom{\mbox{Section 5}}\Downarrow\mbox{Section 5}
$$

$$
\boxed{\mbox{Finiteness  of $(X,\Delta+\sum t_i D_i)^{min}$ in dim.\ $n$
for $0\leq t_i\leq 1$}}
$$

$$
\hphantom{\mbox{Section 2}}\Downarrow\mbox{Section 2}
$$

$$
\boxed{\mbox{MMP with scaling in dim.\ $n$}}
$$

\bigskip
(Finiteness questions).
 While we expect all 3 versions of the MMP to work
for any $(X,\Delta)$, the above 
finiteness is quite subtle.
Even a smooth surface can contain infinitely many extremal rays,
thus the very first step of the Mori-MMP sometimes
offers an infinite number of choices.

By contrast,  if  $\Delta$ is  big, then
there are only finitely many possible models
reached by   Mori-MMP. This is, however, much stronger than
what is needed for the proof.

It would be very useful
 to pin down what kind of finiteness to expect in general.

\section{MMP with scaling}

Begin with $K+\Delta+t_0 H$ nef, $t_0>0$.

\begin{enumerate}
\item Set $t=t_0$ and decrease it. 
\item We hit a first \emph{critical value} $t_1\leq t_0$.
Here  $K+\Delta+t_1 H$ nef but
$K+\Delta+(t_1 -\eta) H$ is not nef for $\eta>0$. 
\item This means that 
there exists an extremal ray  $R\subset \overline{NE}(X)$
such that 
$$
R\cdot (K+\Delta+t_1 H)=0\qtq{and} R\cdot H>0.
$$
Thus $R\cdot (K+\Delta)<0$ and $R$ is a ``usual'' extremal ray. 
\item Contract/flip  $R$ to get $X_0\map X_1$  and continue.
\end{enumerate}

The problem is that we could  get an infinite sequence
$$
X_1\map X_2\map X_3 \map\cdots .
$$
{\emph Advantage of scaling:}
In the MMP with scaling,
 each $X_i$ is a minimal model for some  $K+\Delta+ t H$. 
So, if we know finiteness of models  as $t$ varies 
then there is no infinite sequence. Thus
we have proved the implication
$$
\begin{array}{c}
\boxed{\mbox{Finiteness  of $(X,\Delta+tH)^{min}$ in dim.\ $n$
for $0\leq t\leq t_0$}}\\
\Downarrow\\
\boxed{\mbox{MMP with scaling in dim.\ $n$}}
\end{array}
$$

\begin{rem}
In the Mori-MMP, the $X_i$ are 
\emph{not} minimal models of anything predictable.
This makes it quite hard to prove termination
as above since we would need to control all possible models
in advance.
\end{rem}

\section{MMP  with scaling  near $\rdown{\Delta}$}

Start with  $(X, S+\Delta)$, $S$ integral. Here
$(X, S+\Delta)$ is assumed dlt and
in all interesting cases $S\neq 0$, so $(X, S+\Delta)$ is not
klt.

We run MMP with scaling to get a series of contractions/flips
$$  \cdots \map
(X_i, S_i+\Delta_i)\stackrel{\phi_i}{\map}
(X_{i+1}, S_{i+1}+\Delta_{i+1}) \map \cdots
$$

Instead of termination, we claim only a much weaker result,
traditionally known as {\it special termination}.

\begin{prop} There are only finitely many $i$ such that
$S_i\cap \exc(\phi_i)\neq \emptyset$.
\end{prop}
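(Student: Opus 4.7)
The plan is to reduce to termination of MMP with scaling in dimension $n-1$, which is already available in the spiraling induction.

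First I would dispose of divisorial contractions. Each divisorial $\phi_i$ strictly decreases the Picard number $\rho(X_i)$, and flips leave $\rho$ unchanged, so only finitely many of the $\phi_i$ can be divisorial. Discarding those, I may assume every $\phi_i$ is a flip, in particular $\exc(\phi_i)$ has codimension $\geq 2$ in $X_i$, and the strict transform gives a well-defined birational map $\psi_i\colon S_i\dashrightarrow S_{i+1}$. (If $S_i$ is reducible, the argument is applied to each component separately; dlt guarantees each component is normal.)

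Next I would apply adjunction. Since $(X_i,S_i+\Delta_i)$ is dlt, the pair $(S_i,\Theta_i):=(S_i,\diff_{S_i}(\Delta_i))$ is dlt of dimension $n-1$, and $(K_{X_i}+S_i+\Delta_i)|_{S_i}=K_{S_i}+\Theta_i$. Restricting the scaling divisor gives $H_i|_{S_i}$, and the critical values $t_i$ used on $X_i$ are upper bounds for the critical values on $S_i$: the ray $R_i\subset\overline{NE}(X_i)$ satisfies $R_i\cdot H_i>0$ and $R_i\cdot(K_{X_i}+S_i+\Delta_i+t_iH_i)=0$. If $S_i\cap \exc(\phi_i)\neq\emptyset$, a component of the flipping locus meets $S_i$ in a curve $C$ on which $(K_{S_i}+\Theta_i+t_iH_i|_{S_i})\cdot C=0$ and $(K_{S_i}+\Theta_i)\cdot C<0$. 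Then $\psi_i$, read on $S_i$, is a non-isomorphism; using the negativity lemma together with $\phi_i$ being a flip on $X_i$, one checks $\psi_i$ factors as a finite sequence of $(K+\Theta)$-flips for $(S_i,\Theta_i)$ on a common dlt model, each one scaled by (the pullback of) $H_i|_{S_i}$.

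Running this for all $i$ assembles a (relative) MMP with scaling for $(S_1,\Theta_1)$ in dimension $n-1$. By the inductive hypothesis (MMP with scaling in dimension $n-1$), this sequence terminates: only finitely many of the $\psi_i$ are non-isomorphisms. Translating back, only finitely many $i$ satisfy $S_i\cap \exc(\phi_i)\neq\emptyset$, which is the claim.

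The main obstacle is the last sentence of the third paragraph: verifying that the restricted map $\psi_i$ is genuinely a (composition of) step(s) of an MMP with scaling for the adjoint pair, rather than merely some birational contraction. One must pass to a common dlt modification of $S_i$ and $S_{i+1}$, check $(K+\Theta)$-negativity of each contracted curve, match the scaling, and control possible reducibility and exceptional components appearing on $S$ but not on $X$; these are the standard technicalities of special termination, and (as the lecturer warns in the ``major side issues'') they require careful work with dlt pairs and the relative setting, but are by now well understood.
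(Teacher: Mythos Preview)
Your overall strategy---restrict to $S$, use adjunction, and invoke MMP with scaling in dimension $n-1$---is exactly the paper's. But you have relegated to ``standard technicalities'' the one substantive step, and what you wrote in its place is not correct.

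The specific error: you assert that when $S_i\cap\exc(\phi_i)\neq\emptyset$ the flipping locus meets $S_i$ in a \emph{curve} $C$, and that $\psi_i$ then factors as $(K+\Theta)$-flips. In dimension $n$ the flipping locus has dimension up to $n-2$, so its intersection with $S_i$ can be a \emph{divisor} on $S_i$; likewise the flipped locus can meet $S_{i+1}$ in a divisor. Thus $\psi_i\colon S_i\dashrightarrow S_{i+1}$ may contract divisors or (worse) acquire new ones, and a sequence of such maps is not an MMP in any sense---the Picard number of $S_i$ could oscillate indefinitely. Passing to a ``common dlt model'' does not by itself fix this: you still need to rule out the unbounded creation of new divisors.

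The paper handles exactly this point with a discrepancy argument, and it is the real content of the proof. Discrepancies $a(E,X_i,S_i+\Delta_i)$ are non-decreasing in $i$ and strictly increase whenever $\cent_{X_i}(E)\subset\exc(\phi_i)$. If $\psi_i$ creates a new divisor $F_{i+1}\subset S_{i+1}$, there is a divisor $E_{i+1}$ over $X$ with $a(E_{i+1},X_i,S_i+\Delta_i)<a(E_{i+1},X_{i+1},S_{i+1}+\Delta_{i+1})\leq 0$. Since only finitely many $E$ have negative discrepancy, new divisors on $S$ can appear only finitely often; and since each contraction of a divisor on $S_i$ drops $\rho(S_i)$, only finitely many $\psi_i$ contract a divisor. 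Hence for $i\gg 1$ every $\psi_i$ is an isomorphism in codimension~1, and \emph{only then} does the restricted sequence become an MMP with scaling on $S$, to which the inductive hypothesis applies. Your write-up needs this argument inserted where you currently wave at the negativity lemma.
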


\begin{proof} This is all old knowledge, relying on 6 basic
observations.
\begin{enumerate}
\item We can concentrate on 1 component of $S$ and
by a perturbation trick we can assume that $S$ is irreducible.
\item The discrepancy $a(E, X_i, S_i+\Delta_i)$ weakly increases with $i$ and 
strictly increases iff $\cent_{X_i}(E)\subset \exc(\phi_i)$.
\item There are very few $E$ with $a(E, X_i, S_i+\Delta_i)<0$.
\item If $S_i\map S_{i+1}$ creates a {\it new} 
divisor  $F_{i+1}\subset S_{i+1}$, then 
there exists $E_{i+1}$  with 
$a(E_{i+1}, X_{i}, S_{i}+\Delta_{i})<
a(E_{i+1}, X_{i+1}, S_{i+1}+\Delta_{i+1})\leq 0.$
\item Combining these we see that $S_i\map S_{i+1}$ creates
 no new divisors for $i\gg 1$.
\item Only finitely many $S_i\map S_{i+1}$ contracts a divisor, since at
such a step the Picard number of $S_i$ drops.
\end{enumerate}

Thus, for $i\gg 1$, each $S_i\map S_{i+1}$ is an isomorphism
in codimension 1. (Maybe such a map could be called a  \emph{traverse}.)

Therefore,  if  $X_0\map X_1 \cdots $ is an MMP with scaling, then 
$$
S_N\stackrel{\phi_N|_{S_N}}{\map} S_{N+1}\map S_{N+1} \map \cdots
$$
is  also an MMP with scaling for $N\gg 1$,
except that $\phi_i|_{S_i}$ is an isomorphism
when $S_i\cap \exc(\phi_i)\neq \emptyset$.
By induction,  $\phi_i|_{S_i}$ is an isomorphism for $i\gg 1$,
hence $S_i\cap \exc(\phi_i)= \emptyset$.
\end{proof}

Thus we have shown that
$$
\begin{array}{c}
\boxed{\mbox{MMP  with scaling in dim.\ $n-1$}}\\
\Downarrow\\
\boxed{\mbox{Termination  with scaling in dim.\ $n$ near $\rdown{\Delta}$}}
\end{array}
$$

\bigskip

\section{Bending it like BCHM}

This is the hardest part. We
assume termination  with scaling  near $\rdown{\Delta}$
(for many different $X$ and $\Delta$),
and we prove that the roundabout MMP also works.

The proof uses 2 basic lemmas. The first one shows that
under certain conditions, every flip we have to do
involves $\rdown{\Delta}$. The second one shows
how to increase $\rdown{\Delta}$
without changing $(X,\Delta)^{min}$.

\begin{lem}[Scaling to the boundary] Assume that
\begin{enumerate}
\item $K+\Delta\sim cH+F$ for some $c\geq 0$ and $F\geq 0$.
\item $K+\Delta+H$ is nef.
\item $\supp(F)\subset \rdown{\Delta}$.
\end{enumerate}
Then $(X, \Delta+t\cdot H)^{min}$
exists for every $0\leq t\leq 1$.
\end{lem}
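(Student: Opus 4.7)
The plan is to run the MMP for $(X, \Delta)$ with scaling of $H$, started at $t_0 = 1$ (legitimate by hypothesis (2)), and to prove it terminates after finitely many steps via special termination. The intermediate models produced along the way will be the required minimal models of $(X, \Delta + tH)$ for every $t \in [0,1]$.

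First I would verify that hypotheses (1) and (3) are preserved at every step $\phi_i \colon X_i \dashrightarrow X_{i+1}$ of the MMP. Linear equivalence is preserved by birational pushforward, so $K_{X_{i+1}} + \Delta_{i+1} \sim c H_{i+1} + F_{i+1}$ continues to hold; and since neither a flip nor a divisorial contraction creates new divisorial components, the inclusion $\supp(F_{i+1}) \subset \rdown{\Delta_{i+1}}$ also persists.

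The heart of the argument is the observation that the exceptional locus of every step of this MMP is contained in $\rdown{\Delta_i}$. At the critical value $t_i$, the extremal ray $R_i$ selected by the scaling satisfies $R_i \cdot (K + \Delta + t_i H) = 0$ and $R_i \cdot H > 0$. Intersecting (1) with $R_i$ gives
$$R_i \cdot F \;=\; R_i \cdot (K + \Delta) \;-\; c\, R_i \cdot H \;=\; -(c + t_i)\, R_i \cdot H \;<\; 0,$$
unless $c + t_i = 0$ (i.e.\ $c = 0 = t_i$), in which case $K + \Delta$ is already nef on $X_i$ and there is nothing left to do. Hence some component $F_j$ of $F$ satisfies $F_j \cdot R_i < 0$, which forces the flipping (resp.\ divisorial) locus of $\phi_i$ to lie in $F_j \subset \supp(F) \subset \rdown{\Delta_i}$.

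Now I would invoke the special termination result of Section 3 (derived from MMP with scaling in dimension $n-1$): only finitely many $\phi_i$ can have exceptional locus meeting $\rdown{\Delta_i}$. By the previous step \emph{every} $\phi_i$ has this property, so the MMP terminates at some $X_N$ with $K_{X_N} + \Delta_N + tH_N$ nef for $t \in [0, t_N]$. Reading off the minimal models: $X_N$ serves as a minimal model of $(X, \Delta + tH)$ for $t \in [0, t_N]$, and on each intermediate interval $t \in [t_{i+1}, t_i]$ the model $X_i$ plays the same role. I expect the main technical nuisance to be the fit with the special termination statement of Section 3, which was phrased for an irreducible boundary divisor $S$; here $\rdown{\Delta}$ may be reducible, so one would need to argue component by component, as in observation (1) of that proof. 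Once that bookkeeping is cleared, everything is forced by the one-line intersection computation displayed above.
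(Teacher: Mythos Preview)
Your argument is correct and matches the paper's proof essentially line for line: run the MMP with scaling, use $K+\Delta\sim cH+F$ together with $R\cdot H>0$ to force $R\cdot F<0$, hence $\operatorname{locus}(R)\subset\supp(F)\subset\rdown{\Delta}$, and conclude by special termination. Your write-up is in fact more careful than the paper's sketch --- you make explicit the preservation of (1) and (3) under each step, the degenerate case $c=t_i=0$, and the reducible-$\rdown{\Delta}$ bookkeeping --- but the strategy is identical.
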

\begin{proof} (To accomodate $\r$-divisors, $\sim$ can stand for
$\r$-linear equivalence.)
Start scaling. At the critical value, get a ray $R$
such that $R\cdot H>0$ and $R\cdot (K+\Delta)<0$.
Thus $R\cdot (cH+F)<0$ and $R\cdot F<0$, so
$\locus(R)\subset \supp(F)\subset  \rdown{\Delta}.$

Thus every flip encountered in the scaling MMP
intersects the boundary. Thus we have termination.
\end{proof}

If we start with a klt pair $(X,\Delta)$, then
 $ \rdown{\Delta}=0$. From condition (3) thus $F=0$
and hence $K+\Delta\sim cH$. 
Therefore
$K+\Delta\sim \frac{c}{c+1}(K+\Delta+H)$ is nef
and we have nothing to do. Conclusion:
{\it We need a way to increase $\Delta$ without changing the
minimal model!}

\begin{lem}[Useless divisor lemma]
If $\Delta'\subset \mbox{stable base locus of } (K+\Delta)$, 
then $(X, \Delta+\Delta')^{min}=(X,  \Delta)^{min}$.
\end{lem}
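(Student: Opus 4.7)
I read the statement as saying that the minimal models of $(X,\Delta)$ and $(X,\Delta+\Delta')$ coincide: any $\phi\colon X\dashrightarrow Y$ realizing a minimal model of one pair (with the appropriate pushforward boundary on $Y$) is a minimal model of the other. The strategy is to show that every component of $\Delta'$ must be $\phi$-exceptional, so that $\phi_*\Delta'=0$ on $Y$, the two pushforward boundaries agree, and the nefness and discrepancy conditions transfer formally.

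The key step is: if $\phi\colon X\dashrightarrow Y$ is a minimal model of $(X,D)$ in the BCHM setting ($D$ big and $K_X+D$ pseudo-effective), then every prime divisor on $X$ contained in the stable base locus $\mathbf{B}(K_X+D)$ is $\phi$-exceptional. On a common log resolution $p\colon W\to X$, $q\colon W\to Y$, comparing discrepancies yields
\[p^*(K_X+D)=q^*(K_Y+\phi_*D)+F,\]
with $F\ge 0$ effective and $q$-exceptional, whose non-$p$-exceptional part $F_{\mathrm{hor}}$ is supported precisely on the strict transforms of the $\phi$-exceptional prime divisors on $X$. Pushing linear systems down via $p$, for $m$ sufficiently divisible the divisorial part of $\mathrm{Fix}|m(K_X+D)|$ equals $p_*(mF_{\mathrm{hor}})$, because $K_Y+\phi_*D$ is nef and big hence semiample by the base point free theorem, and no divisor sits in its stable base locus. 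Consequently the divisorial part of $\mathbf{B}(K_X+D)$ is exactly the union of the $\phi$-exceptional prime divisors on $X$.

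Applied with $D=\Delta$, every component of $\Delta'\subseteq\mathbf{B}(K_X+\Delta)$ is $\phi$-exceptional, so $\phi_*(\Delta+\Delta')=\phi_*\Delta$ and $K_Y+\phi_*(\Delta+\Delta')=K_Y+\phi_*\Delta$ remains nef. The discrepancy inequality
\[a(E,X,\Delta+\Delta')\le a(E,X,\Delta)<a(E,Y,\phi_*\Delta)=a(E,Y,\phi_*(\Delta+\Delta'))\]
for $\phi$-exceptional prime $E$ is immediate, giving the forward direction. For the converse, which is what the roundabout MMP actually uses, I would run an MMP with scaling for $(X,\Delta+\Delta')$ and observe that each $(K+\Delta+\Delta')$-negative extremal ray $R$ satisfies either $R\cdot\Delta'\ge 0$ (so the step is already $(K+\Delta)$-negative and legitimate for $(X,\Delta)$) or $R\cdot\Delta'<0$ (forcing $\mathrm{locus}(R)\subseteq\mathrm{Supp}(\Delta')$, a step ``useless'' from the $(X,\Delta)$ viewpoint). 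Termination then forces the strict transform of $\Delta'$ to be contracted, and the resulting $Y$ is a minimal model of $(X,\Delta)$ too. The main obstacle is terminating the ``useless'' flips inside $\mathrm{Supp}(\Delta')$, which is not automatic but follows from termination with scaling, the inductive hypothesis in play throughout Chapter~\ref{kollar}; the rest rests on the bigness of $\Delta$ and the base point free theorem on the minimal model.
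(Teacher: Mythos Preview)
Your forward direction is correct and is exactly the mechanism the paper uses: on a minimal model the log canonical divisor is nef and big, hence semiample by base point freeness, so the divisorial stable base locus upstairs coincides with the $\phi$-exceptional locus; thus every component of $\Delta'$ is contracted and the boundaries agree on $Y$. The paper, however, runs this argument on the \emph{other} minimal model. It starts on $Y=(X,\Delta+\Delta')^{min}$, notes that $(\Delta')^{min}\subset\mathbf{B}\bigl((K+\Delta)^{min}\bigr)$ (stable base loci push forward along birational contractions), identifies this with $\mathbf{B}\bigl((K+\Delta+\Delta')^{min}\bigr)$, and observes the latter is empty by base point freeness; hence $\phi_*\Delta'=0$ in two lines, and $Y$ serves as $(X,\Delta)^{min}$. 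So the same stable-base-locus/BPF idea handles the direction you actually need, without running any further MMP.

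Your proposed converse via MMP with scaling is a genuinely different route, and as written it has a gap. The dichotomy on the sign of $R\cdot\Delta'$ is fine, but ``termination then forces the strict transform of $\Delta'$ to be contracted'' is not justified: termination only gives nefness of $K_Y+\phi_*(\Delta+\Delta')$, and nothing in your case analysis singles out the components of $\Delta'$ as the divisors that must be contracted. Worse, even granting $\phi_*\Delta'=0$, the discrepancy comparison goes the wrong way for $(X,\Delta)$: from $a(E,X,\Delta)\geq a(E,X,\Delta+\Delta')$ and $a(E,X,\Delta+\Delta')<a(E,Y,\phi_*\Delta)$ you cannot conclude $a(E,X,\Delta)<a(E,Y,\phi_*\Delta)$, so the negativity condition for a minimal model of $(X,\Delta)$ is not established. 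The paper's direct BPF argument on $(X,\Delta+\Delta')^{min}$ avoids this entirely.
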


\begin{proof}  Note that 
$$
\begin{array}{ccc}
(\Delta')^{min}&\subset& \mbox{stable base locus of } (K+\Delta)^{min}\\
&& || \\
&& \mbox{stable base locus of } (K+\Delta+\Delta')^{min}
\end{array}
$$
However, $(K+\Delta+\Delta')^{min}$ is base point free, thus
$(\Delta')^{min}=0$, and so
$(X, \Delta+\Delta')^{min}=(X, \Delta)^{min}.$
\end{proof}

\begin{cor} We can always reduce to the case when
the stable base locus of  $K+\Delta^+$ is in
$\rdown{\Delta^+}$.
\end{cor}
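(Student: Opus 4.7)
My plan is to invoke the Useless Divisor Lemma to absorb the divisorial components of the stable base locus of $K+\Delta$ into the integer part of a slightly enlarged boundary $\Delta^+$.

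First I will invoke the non-vanishing input (flagged as a ``major side issue'') to write $K+\Delta\sim_{\bR} E$ for some effective $\bR$-divisor $E$. Then the stable base locus of $K+\Delta$ is contained in $\supp(E)$, so its divisorial part is a finite union of prime divisors $D_1,\ldots,D_r$. Letting $a_i:=\text{coeff}_{D_i}(\Delta)<1$ (by klt), I set
$$\Delta' := \sum_{i=1}^r (1-a_i)\, D_i,\qquad \Delta^+ := \Delta + \Delta',$$
so that $\supp(\Delta')$ is contained in the stable base locus of $K+\Delta$ and $\text{coeff}_{D_i}(\Delta^+)=1$ for each $i$, i.e.\ $D_1+\cdots+D_r\le \rdown{\Delta^+}$.

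Next, the Useless Divisor Lemma (applied directly, or one summand at a time) yields $(X,\Delta^+)^{\min}=(X,\Delta)^{\min}$, so the passage $\Delta\rightsquigarrow\Delta^+$ is genuinely a reduction. It remains to identify the stable base locus of $K+\Delta^+$: multiplication by the canonical section of $\cO(m\Delta')$ embeds $H^0(m(K+\Delta))$ into $H^0(m(K+\Delta^+))$, whence $\bs|m(K+\Delta^+)|\subseteq \bs|m(K+\Delta)|\cup\supp(\Delta')$ for every $m$ making these divisors integral. Intersecting over $m$, the stable base locus of $K+\Delta^+$ is contained in that of $K+\Delta$ together with $\supp(\Delta')$, and every divisorial component of this union lies among $\{D_1,\ldots,D_r\}\subseteq\rdown{\Delta^+}$, which is exactly what is required.

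The expected main obstacle is that raising coefficients to $1$ takes us out of the klt category: $(X,\Delta^+)$ is in general only lc (or dlt after first passing to a log resolution on which $\Delta+E$ has snc support). This is precisely the ``klt/dlt/lc'' caveat the author flagged, and is why the roundabout MMP is formulated for dlt pairs; one must carry this change of singularity class through when feeding the output into Scaling to the Boundary.
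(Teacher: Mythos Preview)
Your construction is exactly the paper's: raise the coefficients of the base-locus divisors to~$1$ and invoke the Useless Divisor Lemma. The one substantive difference is the placement of the log resolution. The paper performs it \emph{first}, so that $(X,\Delta)$ is snc and the stable base locus itself is a (reduced snc) divisor $\sum D_i$; only then does it set $\Delta':=\sum(1-d_i)D_i$. You instead work on the original $X$, extract only the \emph{divisorial part} of the stable base locus, and postpone the resolution to a parenthetical at the end, citing it solely as a way to secure dlt singularities.

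This ordering matters for the statement as written. Your final sentence claims that ``every divisorial component of this union lies among $\{D_1,\ldots,D_r\}\subseteq\rdown{\Delta^+}$, which is exactly what is required.'' But the corollary asks that the \emph{entire} stable base locus of $K+\Delta^+$ lie in $\rdown{\Delta^+}$, not just its divisorial part; on a singular or unresolved $X$ there may be higher-codimension base components not lying on any $D_i$. Passing to a log resolution up front kills this issue (the base locus becomes a divisor) and simultaneously gives the snc structure needed for $(X,\Delta^+)$ to be dlt. So your argument is correct once you move the resolution to the beginning; as written, the conclusion is weaker than what is claimed.
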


\begin{proof} First we can take a  log resolution to
assume that $(X,\Delta)$ has simple normal crossings only.
Write the stable base locus as $\sum_{i=1}^r D_i$
and $\Delta=\sum_{i=1}^r d_iD_i+(\mbox{other divisors})$,
where $d_i=0$ is allowed.
Set $\Delta':=\sum_{i=1}^r (1-d_i)D_i$.
 Then $\Delta^+:=\Delta+\Delta'=\sum_{i=1}^r D_i+(\mbox{other divisors})$
and $\rdown{\Delta+\Delta'}\supset \sum_{i=1}^r D_i$.
\end{proof}

\begin{say}[Bending I:  
$K+\Delta$ is a $\q$-divisor]{\ }

We proceed in 6 steps:

\begin{enumerate}\setcounter{enumi}{-1}
\item Write $K+\Delta \sim rM+F$ where $M$ is mobile, 
irreducible and $F$ is in the stable base locus.
\item Take log resolution.
With $\Delta'$ as above, set $\Delta^+:=\Delta+\Delta'$
and $F^+:=F+\Delta'$. 
Then $K+\Delta^+ \sim rM+F^+$ and
$\supp(F^+)\subset \rdown{\Delta^+}.$
\item Add $M$ to $\Delta^+$ and pick an ample $H$ to get 
	\begin{enumerate}
	\item $K+\Delta^++M\sim 0\cdot H+\bigl((r+1)M+F^+\bigr)$ 
	\item $K+\Delta^++M+H$ is nef, and
	\item $\supp(M+F^+)\subset \rdown{\Delta^++M}$.
	\end{enumerate}
\item Scale by $H$ to get $(X, \Delta^++M)^{min}$. Now we have:
	\begin{enumerate}
	\item $K+\Delta^+\sim rM+F^+$. 
	\item $K+\Delta^++M$ is nef.
	\item $\supp(F^+)\subset \rdown{\Delta^+}$.
	\end{enumerate}
\item Scale by $M$ to get $(X, \Delta^+)^{min}$.
\item By the useless divisor lemma, 
$(X, \Delta^+)^{min}=(X, \Delta)^{min}$.
\end{enumerate}
\end{say}

\begin{say}[Bending II:  
$K+\Delta$ is an $\r$-divisor]{\ }

We follow the same 6 steps, but there are
extra complications.

\begin{enumerate}
\item[(0)]  Write  $K+\Delta \sim r_iM_i+F$, where the $M_i$ are mobile,
irreducible and 
$F$ is in the stable base locus. (This is actually not obvious and
the Oct.\ 2006 version has mistakes.)
\item[(1--3)] goes as before and we  get
\begin{enumerate}
\item $K+\Delta^+\sim \sum r_iM_i+F^+$ 
\item $K+\Delta^++\sum M_i$ is nef, and
\item $\supp(F^+)\subset \rdown{\Delta^+}$.
\end{enumerate}
\item[(4)] Let me first describe 2 attempts that
do not work.

 (First try): Scale  $\sum M_i$. The problem is that
 $\sum r_iM_i\neq c \sum M_i$.

(Second try):  Scale $M_1$. This works, but the support condition 
(c) 
fails at next step when we try to scale $M_2$.

(Third try):  Do not take \emph{all} of $M_1$ away at the first step.

Reorder the indices so that $r_1\leq r_2\leq \cdots \leq  r_k$.
We then  construct inductively minimal models for
$$\bigl(X,\Delta^++\tfrac1{r_j}(r_1M_1+\cdots+r_{j-1}M_{j-1})+
M_j+\cdots +M_k\bigr)$$

We already have the $j=1$ case. Let us see how to do
 $j \rightarrow j+1$.

By assumption
$K+\Delta^++\frac1{r_j}(r_1M_1+\cdots+r_{j-1}M_{j-1})+
(M_j+\cdots +M_k)$ is nef. Move
 $M_j$  from the right sum to the left. Thus now we have:
\begin{enumerate}
\item $K+\Delta^+\sim (r_1M_1+\cdots+r_{j}M_{j})+
(r_{j+1}M_{j+1}+\cdots +r_kM_k+F^+)$ 
\item $K+\Delta^++\frac1{r_j}(r_1M_1+\cdots+r_{j}M_{j})+
(M_{j+1}+\cdots +M_k)$ is nef, and
\item $\supp(r_{j+1}M_{j+1}+\cdots +r_kM_k+F^+)\subset 
\rdown{M_{j+1}+\cdots +M_k+\Delta^+}$.
\end{enumerate}
Scale $r_1M_1+\cdots+r_{j}M_{j}$ with scale factor  $r_j/r_{j+1}$.
This gives the $j+1$ case.

At the end (and after moving $M_k$ into the sum) we have:

\begin{enumerate}
\item $K+\Delta^+\sim \sum r_iM_i+F^+$ 

\item $K+\Delta^++\frac1{r_k}(r_1M_1+\cdots+r_{k}M_{k})$ is nef.
\item $\supp(F^+)\subset \rdown{\Delta^+}$.
\end{enumerate}
Now we can scale by $r_1M_1+\cdots+r_{k}M_{k}$ to get
$(X, \Delta^+)^{min}$.

\item[(5)]  By the useless divisor lemma,
 $(X,\Delta^+)^{min}=(X,\Delta)^{min}$.
\end{enumerate}
\end{say}
Thus we have proved that 
$$
\begin{array}{c}
\boxed{\mbox{Termination  with scaling in dim.\ $n$ near $\rdown{\Delta}$}}\\
\Downarrow\\
\boxed{\mbox{Existence of $(X,\Delta)^{min}$ in dim.\ $n$}}
\end{array}
$$

\section{Finiteness of models}

More generally, we claim that the set of models
$(X, \Delta_w)^{min}$ is finite as
$\Delta_w$ moves in a  compact set of
$\r$-divisors satisfying 3 conditions:
\begin{enumerate}
\item Every $\Delta_w$ is big. Note that being big is not a closed
condition and it would be very good to remove this restriction.
Without it one could get minimal models for non-general type
$(X,\Delta)$ by getting 
$(X,\Delta+\epsilon(\mbox{ample}))^{min}$ and then letting
$\epsilon\to 0$.
\item Every $K+\Delta_w$ is effective. By definition, being
pseudo-effective is a closed condition. It is here that
non-vanishing comes in: it ensures that  being effective
is also
a closed condition.
\item $K+\Delta_w$ is klt. This is preserved by making
$\Delta_w$ smaller, which is what we care about.
\end{enumerate}

By compactness, it is enough to prove
finiteness  locally, that is, finiteness   of the models
 $(X,\Delta'+\sum t_i D_i)^{min}$ 
for $ |t_i|\leq \epsilon$ (depending on $\Delta'$).
(Important point: Even if we only care about $\q$-divisors, we
 need this for $\r$-divisors $\Delta'$!)
\begin{proof} Induction on $r$ for $D_1,\dots, D_r$.
Let $(X^m, \Delta^m):=(X,\Delta')^{min}$. By the base point free theorem,
we have $g:X^m\to X^c$ such that $K_{X^m}+\Delta^m\sim g^*(\mbox{ample})$.
So, for $|t_i|\ll 1$, $g^*(\mbox{ample}) \gg \sum t_i D_i$, 
\emph{except} on the fibers of $g$.
(This is more delicate than it sounds, but not hard
to prove for extremal contractions.)

Thus the MMP to get $(X^m,\Delta^m+\sum t_i D_i)^{min}$ is relative to $X^c$. 
We can switch to working locally over $X^c$,  we thus
 assume that $K_{X^m}+\Delta^m\sim 0$.
So 
$$K_{X^m}+\Delta^m+ c\sum t_i D_i\sim 
c\bigl(K_{X^m}+\Delta^m+ \sum t_i D_i\bigr)$$
Therefore
$(X^m,\Delta^m+ \sum t_i D_i)^{min}=(X^m, \Delta^m+c\sum t_i D_i)^{min}$.
For $(t_1,\dots, t_r)$
choose $c$ such that $\max_i|ct_i|=\epsilon$, that is,
$(ct_1,\dots,ct_r)$ is on a face of $[-\epsilon, \epsilon]^r$.
Ths shows that we get all possible $(X^m,\Delta^m+ \sum t_i D_i)^{min}$
by computing $(X^m,\Delta^m+ \sum t_i D_i)^{min}$
only for those $(t_1,\dots, t_r)$ which are
on a face of the $r$-cube 
$[-\epsilon,\epsilon]^r$. The faces
are $2r$ copies of the $(r{-}1)$-cube.
So we are done by induction on $r$.
The notation $(X,\Delta)^{min}$
helped us  skirt the issue whether there may be
infinitely many minimal models for a given
$(X,\Delta)$. (This happens in the non-general type cases,
even for smooth minimal models.) This is, however, no problem here.
We found one $\q$-factorial model $g:X^m\to X^c$
such that $K_{X^m}$ is numerically $g$-trivial.

Let $D_1,\dots, D_r$ be a basis of the N\'eron-Severi
group. Every other model where $K+\Delta$ is nef lives over $X^c$
and some $\sum t_iH_i$ is $g$-ample on it.
We can thus find every such model as a canonical model
for $(X,\Delta+\epsilon \sum t_iH_i)$ for
$0<\epsilon\ll 1$.
\end{proof}

We have now proved that
$$
\begin{array}{c}
\boxed{\mbox{Existence of $(X,\Delta)^{min}$ in dim.\ $n$}}\\
\Downarrow\\
\boxed{\mbox{Finiteness  of $(X,\Delta+\sum t_i D_i)^{min}$ in dim.\ $n$
for $0\leq t_i\leq 1$}}
\end{array}
$$
\medskip

and the spiraling induction  is complete.


\begin{thebibliography}{99}
\bibitem[BCHM]{BCHM} C.~Birkar, P.~Cascini, C.~Hacon, J.~McKernan, \emph{Existence of minimal models for varieties of log general type}, preprint
arXiv:math/0610203v1 [math.AG] (2006).
\bibitem[HM]{HM} C.~Hacon, J.~McKernan, \emph{On the existence of flips},
preprint arXiv:math/0507597v1 [math.AG] (2005).
\end{thebibliography}
\end{document}